\documentclass{amsart}
\setlength{\vfuzz}{2mm} \setlength{\textwidth}{165mm}
\setlength{\textheight}{200mm} \setlength{\oddsidemargin}{0pt}
\setlength{\evensidemargin}{0pt}
\usepackage{xcolor}
\usepackage{todonotes}
\usepackage{amscd,amsmath,latexsym,bm,pdfpages,verbatim,amsthm}
\usepackage{enumerate}

\usepackage[linktocpage=true, pdfencoding=auto, psdextra]{hyperref}
\usepackage{amssymb,amscd}
\usepackage[cp1251]{inputenc}
\usepackage[T2A]{fontenc}
\usepackage{graphicx}
\usepackage{pstricks,pst-grad,pst-node}
\usepackage[all]{xy}
\usepackage{dsfont}
\usepackage{bbm}
\usepackage{tikz}
\usepackage{tikz-cd}
\usetikzlibrary{cd}
\usepackage{mathtools,enumerate}
\newtheorem{thm}{Theorem}[section]
\newtheorem{cor}[thm]{Corollary}
\newtheorem{lem}[thm]{Lemma}
\newtheorem{prop}[thm]{Proposition}

\theoremstyle{definition}
\newtheorem{defin}[thm]{Definition}
\theoremstyle{definition}

\theoremstyle{remark}

\theoremstyle{remark}

\newcommand{\R}{\mathbb{R}}
\renewcommand{\C}{\mathbb{C}}
\newcommand{\Z}{\mathbb{Z}}

\def\C{{\mathbb C}}
\def\R{\mathbb{R}}

\def\z2{{\bf Z}$_{2}$}
\def\zl2{{\bf Z}$_{(2)}$}

\def\rmac{\mathcal{Z}^{\mathbb{R}}_K}
\def\mac{\mathcal{Z}_K}
\def\cxx{(\underline{CX},\underline{X})}

\def\polX{\pol{\underset{1 \leq i \leq m}{\bigoplus} \widetilde{H}^*(\Sigma X_i)}}
\def\polXI{\pol{\bigoplus \widetilde{H}^*(\Sigma X_i)}}

\renewcommand{\z}[1]{\mathcal Z_{#1}}
\newcommand{\pol}[1]{\k\langle {#1}\rangle}
\def\Tor{\mathrm{Tor}}
\def\k{\mathbbm k}
\newcommand{\ym}{y_1,\ldots,y_m}
\newcommand{\omegam}{\omega_1,\ldots,\omega_m}
\newcommand{\bideg}{\mathop{\rm bideg}}
\newcommand{\id}{\mathrm{id}}
\begin{document}

	\title[Differential Graded Models]{Models for the Cohomology of Certain Polyhedral Products}

	\author[M.~Bendersky]{M.~Bendersky}
	\address{Department of Mathematics, Hunter College, CUNY,   695 Park Avenue New York, NY 10065, U.S.A.}
	\email{mbenders@hunter.cuny.edu}
	
	\author[J.~Grbi\' c]{J.~Grbi\' c}
	\address{School of Mathematical Sciences, University of Southampton, SO17 1BJ Southampton, UK}
	\email{J.Grbic@soton.ac.uk}
	
	\begin{abstract}
	For a commutative ring $\k$ with unit, we describe and study various differential graded $\k$-modules and $\k$-algebras which are models for the cohomology of polyhedral products $(\underline{CX},\underline X)^K$. Along the way, we prove that the integral cohomology $H^*((D^1, S^0)^K; \Z)$ of the real moment-angle complex is a Tor module, the one that does not come from a geometric setting. We also reveal that the apriori different cup product structures in $H^*((D^1, S^0)^K;\Z)$ and in $H^*((D^n, S^{n-1})^K; \Z)$ for $n\geq 2$ have the same origin.
	As an application, this work sets the stage for studying the based loop space of $(\underline{CX}, \underline X)^K$ in terms of the bar construction applied to the differential graded $\Z$-algebras $B(\mathcal C^*(\underline X; \Z), K) $ quasi-isomorphic to the singular cochain algebra $\mathcal C^*((\underline{CX},\underline X)^K;\Z)$.
	\end{abstract}
	
	\
	
	\maketitle

	\tableofcontents

\section{Introduction}
	
One of the most studied algebraic invariants of topological spaces is their cohomology ring. With its rich and relatively approachable structure, cohomology is a homotopy theoretical flagship invariant harnessed by a host of mathematical disciplines. 

In the realm of toric topology, polyhedral products $(\underline X, \underline A)^K$ are constructed as a functorial interplay between topology and combinatorics in term of  topological pairs $(\underline X, \underline A)$ and a simplicial complex $K$. More precisely, let $K$ be a simplicial complex on the vertex set $[m]=\{1,\ldots, m\}$ and let $(\underline{X},\underline{A})=\{(X_i,A_i)\}_{i=1}^m$ be an $m$-tuple of $CW$-pairs. The \textit{polyhedral product} is defined by
\[
(\underline{X},\underline{A})^K = \bigcup_{\sigma \in K} (\underline{X},\underline{A})^\sigma \subseteq \prod_{i=1}^m X_i \] 
where 
\[
(\underline{X},\underline{A})^\sigma = \prod_{i=1}^m Y_i, \quad  Y_i = \begin{cases} X_i & \text{for }i \in \sigma \\ A_i & \text{for }i \notin \sigma. \end{cases}
\]
	

With $CX_i$ being the cone on $X_i$, we restrict our attention to polyhedral products where the topological pairs are $(CX_i,X_i)$ for all $i$.
Of particular importance are $X_i=S^1$ for all $i$ in which case the polyhedral product is identified with the complex {\it moment-angle complex}, and is denoted by $\mac$.  If $X_i=S^0$ for all $i$, the polyhedral product is the real {\it moment-angle complex}, denoted by $\rmac$. 

Before we start with the study of polyhedral products, let us lay down some basic combinatorial notations. For an abstract simplicial complex $K$ on the vertex set $[m]$, let $|K|$ denote its geometric realisation and let $K_J$ be the full subcomplex of $K$ consisting of all simplicies of $K$ with vertices in $J\subset[m]$.

Moment-angle complexes are known as the bellwethers for the combinatorial coding of topological and geometrical properties.	For example, there is a stable decomposition of a real moment-angle complex $\rmac=(D^1,S^0)^K$ in terms of full subcomplexes of the simplicial complex $K$ given by
\[
\Sigma (D^1, S^0)^K=\Sigma \bigvee_{J\subseteq [m]}|\Sigma K_J|.
\]

This decomposition was generalised in \cite{bbcg} to a stable decomposition of polyhedral products, which specialises to the following stable decomposition of $\cxx^K$,
\[
\Sigma \cxx^K \longrightarrow \Sigma	\bigvee_{J\subseteq [m]}|\Sigma K_J| \wedge  Y^{\wedge J}
\]
where 
\[
Y^{\wedge J} = \underset{i \in [m]}{\bigwedge }Y_i  \mbox{ with } Y_i = X_i \mbox{ if } i \in J \mbox{ and } Y_i = S^0 \mbox{ if } i \notin J.
\]
Directly from these stable decompositions, we can read off the underlying $\k$-module  of the cohomology of the real moment-angle complex, of the complex moment-angle complex and of the polyhedral product $(\underline{CX}, \underline X)^K$, respectively as
\[
H^*((D^1, S^0)^K; \k)\cong \bigoplus_{J\subseteq [m]} H^{*-1}(|K_J|;\k), \qquad H^*((D^2, S^1)^K; \k)\cong \bigoplus_{J\subseteq [m]} H^{*-|J|-1}(|K_J|;\k) \qquad \text{and}
\]
\[
  H^*((\underline{CX}, \underline X)^K; \k)\cong \bigoplus_{J\subseteq [m]}H^{\ast}(|\Sigma K_J|;\k) \otimes H^J
\]
where $H^J = \underset{i \in [m]}{\otimes }H_i$ with $H_i = \widetilde{H}^{\ast}(X_i;\k)$ if $i \in J$ and  $Y_i = \k$ if $i \notin J$ assuming that $H^*(X_i;\k)$ are free $\k$-modules for all $i$.

Let $K$ be a simplicial complex on the vertex set $[m]$. Denote by $\k[v_1,\ldots,v_m]$ the polynomial ring on $m$ variables $v_i$ and for every set $I=\{i_1,\ldots, i_r\}\subset[m]$, let $v_I=v_{i_1}\cdots v_{i_r}$. Then the {\it Stanley-Reisner} ring is defined by
\[
SR[K]=\k[v_1,\ldots,v_m]/\mathcal I_K
\]
where $\mathcal I_K=(v_I\ |\ I\notin K)$ is the ideal generated by those square-free monomials $v_I$ for which $I$ is not a simplex of~$K$.
If $v_1,\ldots, v_m$ are all of degree 2, then 
\[
H^*((\C P^\infty, *)^K;\k)\cong SR[K].
\]
Using the language of toric varieties Franz~\cite{F2} and independently using the existence of the homotopy fibration
\[
T^m=(S^1, S^1)^K\longrightarrow (D^2, S^1)^K\longrightarrow (\C P^\infty, *)^K\longrightarrow (\C P^\infty, \C P^\infty)^K=BT^m
\]
Buchstaber and Panov~\cite{bp} showed that there is an algebra isomorphism 
\[
H^*((D^2, S^1)^K;\Z)\cong \Tor_{\Z[v_1,\ldots,v_m]}(SR[K], \Z).
\]
Similarly, Franz~\cite{f} showed that there is an additive isomorphism
\[
H^*((D^1, S^0)^K;\Z_2)\cong \Tor_{\Z_2[t_1,\ldots,t_m]}(SR_{\Z_2}[K], \Z_2)
\]
where $t_i$ are of degree 1 and $SR_{\Z_2}[K]=\Z_2[t_1,\ldots,t_m]/\mathcal I_K$ which cannot be extended to a multiplicative isomorphism for the canonical product on the Tor. This additive description of the cohomology of the real moment-angle complex can be related to the homotopy fibration
\[
\Z_2^m=(S^0, S^0)^K\longrightarrow (D^1, S^0)^K\longrightarrow (\R P^\infty, *)^K\longrightarrow (\R P^\infty, \R P^\infty)^K=B\Z_2^m.
\]
In this paper we extend Franz's result by showing, as Corollary~\ref{cor:toriso}, that $H^*((D^1, S^0)^K;\k)$	is additively isomorphic to a Tor $\k$-module. However this Tor $\k$-module does not have a geometric origin and the additive isomorphism cannot be extended to a one of algebras.

Mimicking our approach for the cohomology of the real moment-angle complex, that is, by considering the polynomial ring on  suspended cohomological classes of all $X_i$'s and introducing a generalised Stanley-Reisner ring $SR(\underline X, K)$, see Definitions~\ref{def:genpoly} and~\ref{def:genSR}, we next try to describe $H^*((\underline{CX}, \underline X)^K;\k)$ as a Tor module. This approach allow us only to identify $H^*((\underline{CX}, \underline X)^K;\k)$ as a summand of such a Tor module, see Proposition~\ref{prop:CisoR}. To possibly get a description of $H^*((\underline{CX}, \underline X)^K;\k)$ as a Tor module, in future work we plan to replace the polynomial ring on suspended cohomology classes of all $X_i$'s by an algebraic object that behaves as an algebraic ``delooping" of $X_i$'s.

In Section 3 we give two integral  algebraic models $B(\underline X,K)$ and $B(\mathcal C^*(\underline X),K)$ for the 
cohomology of $(\underline{CX}, \underline X)^K$, see Propositions~\ref{Bmodelofcxx} and \ref{prop:algebramodel}.
The results are obtained by combining Franz's~\cite{f} and Cai's~\cite{c} model of the integral cohomology ring of the real moment-angle complex with the result of Bahri, Bendersky, Cohen and Gitler~\cite{bbcg1} stating that the algebra structure in $H^*((\underline{CX}, \underline X)^K;\k)$ depends only on the cup product structure in $H^*((D^1, S^0)^K; \k)$ and the cup product structure in $H^*(X_i; \k)$ for all~$i$.
The algebraic model $B(C^*(X);\k)$ is particularly interesting as it is  quasi-isomorphic to the $\k$-cochains on $(\underline{CX}, \underline X)^K$ and therefor the bar construction can be applied to it to deduce a model over $\k$ for the based loop space of $\cxx^K$ when $X_i$ have free $\k$-cohomology.

We finish the paper by explaining how seemingly different the cup product structures in $H^*((D^1, S^0)^K;\Z)$ and in $H^*((D^n, S^{n-1})^K; \Z)$ for $n\geq 2$ have the same origin.

	\section{Additive models}
	\subsection{Real moment-angle complexes }
	Let $\k$ be a commutative ring, which in this paper will be assumed to be either the ring $\Z$ of integers or a field. 
	Let $y_1,\ldots, y_m$ be of degree $1$.
	Define the graded algebra $\pol{\ym}$ as
	\[
	\pol{y_1,\ldots, y_m}= T(y_1,\ldots,y_m)/( y_iy_j=y_jy_i)
	\]
	where $T$ is a free associative algebra.
	Notice that here we are assuming commutativity not graded commutativity.  Considered as a non-graded object $\pol{y_1,\ldots, y_m}$ is isomorphic to the polynomial algebra $\k[y_1,\ldots, y_m]$. 
	Given a subset
	$I=\{i_1,\ldots,i_r\}\subset[m]$, we denote by $y_I$ the square-free monomial $y_{i_1}\cdots y_{i_r}$ in~$\pol{y_1,\ldots, y_m}$.
	
	Define an analogue of the Stanley-Reisner ring to be
	\[
	SR\langle K\rangle =\pol{y_1,\ldots, y_m}/\mathcal I_K
	\]
	where $\mathcal I_K=(y_I\ |\ I\notin K)$ is the ideal generated by those monomials $y_I$ for which $I$	is not a simplex of~$K$.
	
	For $\omega_1,\ldots,\omega_m$ of degre~$0$, define the graded algebra $L$ as
	\begin{equation}\label{Lomega}
		L(\omega_1,\ldots,\omega_m)=T(\omega_1,\ldots,\omega_m)/( \omega_i^2=0,\ \omega_i\omega_j=-\omega_j\omega_i).
	\end{equation}
	As in the case of $\pol{\ym}$, the multiplication is not graded commutative, and considered as a non-graded object $L(\omegam)$ is isomorphic to the exterior algebra $\Lambda (\omegam)$.
	
	Define the bigraded differential algebra $(E,d)$ by
	\begin{equation}\label{Ekoszul}
		E=T(\omegam,\ym)/(\omega_i^2=0,\ \omega_i\omega_j=-\omega_j\omega_i,\ y_iy_j=y_jy_i,\ \omega_iy_j=y_j\omega_i)
	\end{equation}
	such that 
	\[
		\begin{gathered}
			\bideg \omega_i=(-1,1),\quad\bideg y_i=(0,1),\\
			d\omega_i=y_i,\quad dy_i=0
		\end{gathered}
	\]
	and requiring the differential $d$ to satisfy the identity 
	\begin{equation}\label{diffsign}  d(a\cdot b)=d(a)\cdot b+(-1)^{\deg_1(a)}a\cdot d(b)\end{equation} where $\bideg(a)=(\deg_1(a), \deg_2(a))$.
	

	\begin{lem}\label{lem:koszulresolution}
		The differential graded algebra $(E,d)$ with the differential given by $d(\omega_i)=y_i$ and $d(y_i)=0$ is a free $\pol{y_1, \ldots, y_m}$-resolution of $\k$.
	\end{lem}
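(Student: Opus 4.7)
The plan is to recognize $(E,d)$ as a repackaging of the classical Koszul resolution of $\k$ over the polynomial ring $\pol{y_1,\ldots,y_m}$ and to invoke the standard argument. There are two things to verify: (i) $E$ is free as a left $\pol{y_1,\ldots,y_m}$-module, and (ii) the augmentation $\varepsilon\colon E\to \k$, sending every $y_i$ and every $\omega_i$ to $0$, makes $E\to\k$ an acyclic complex.

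For freeness, using the relations in~(\ref{Ekoszul}) I would move every $y_j$ past every $\omega_i$ (they commute), kill any repeated $\omega_i$, and reorder the remaining $\omega$'s to strictly increasing index at the cost of $\pm$ signs. This writes any element of $E$ as a $\pol{y_1,\ldots,y_m}$-linear combination of the $2^m$ square-free monomials $\omega_I=\omega_{i_1}\cdots\omega_{i_r}$ for $I=\{i_1<\cdots<i_r\}\subseteq[m]$, so $E=\bigoplus_{I\subseteq[m]}\pol{y_1,\ldots,y_m}\cdot\omega_I$; linear independence of these generators will drop out of Step~2.

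For acyclicity, the key observation is that with respect to the first component of the bidegree one has $\deg_1 \omega_i=-1$ and $\deg_1 y_j=0$, so the relations $\omega_i\omega_j=-\omega_j\omega_i$, $y_iy_j=y_jy_i$, $\omega_iy_j=y_j\omega_i$ are exactly the graded-commutativity relations when signs are read off the first degree. Consequently $(E,d)$ is isomorphic as a differential graded algebra to the graded-commutative tensor product
\[
(E,d)\;\cong\;\bigotimes_{i=1}^{m}\bigl(\k[y_i]\otimes\Lambda(\omega_i),\,d_i\bigr),\qquad d_i(\omega_i)=y_i,\ d_i(y_i)=0,
\]
each factor being the elementary Koszul complex $\k[y_i]\xrightarrow{\,\cdot y_i\,}\k[y_i]$ in homological degrees $-1$ and $0$. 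Since $y_i$ is a nonzerodivisor in $\k[y_i]$, each factor has homology $\k$ concentrated in degree $0$, and every factor is a complex of free $\k$-modules with free homology, so the algebraic K\"unneth theorem over the PID $\k$ yields $H_\ast(E,d)\cong \k$ in degree~$0$, proving (ii).

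The main obstacle is the bookkeeping between two notions of commutativity: the authors describe $\pol{y_1,\ldots,y_m}$ and $L(\omega_1,\ldots,\omega_m)$ as \emph{ordinarily} commutative (ignoring degrees), yet the Leibniz rule~(\ref{diffsign}) carries a graded sign. Before invoking K\"unneth one has to confirm that the bidegree assignments in $E$ make the relations of~(\ref{Ekoszul}) coincide with the graded-commutativity relations that govern the tensor product of dg algebras; this is precisely what the bidegrees $(-1,1)$ and $(0,1)$ were engineered to achieve. Once this compatibility is checked, the tensor product identification and the K\"unneth computation are entirely routine.
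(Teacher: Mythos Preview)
Your argument is correct and rests on the same reduction as the paper's: both factor $(E,d)$ as the tensor product $\bigotimes_{i=1}^m(\k[y_i]\otimes\Lambda(\omega_i),d_i)$ of one-variable Koszul complexes. Where you then invoke the K\"unneth theorem to conclude $H^*(E)\cong\k$, the paper instead builds an explicit contracting homotopy by induction on~$m$: it writes down $s_1$ on $\k[y]\otimes\Lambda(\omega)$ by hand and sets $s_k=s_{k-1}\otimes\id+\eta_{k-1}\varepsilon_{k-1}\otimes s_1$ for the tensor product. Your route is shorter and more conceptual; the paper's is more self-contained and produces an explicit null-homotopy (and incidentally does not need the PID hypothesis you invoke---since each factor is a complex of free $\k$-modules with free homology, K\"unneth already applies over any commutative ring). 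One small wrinkle: you defer the linear independence of the $\omega_I$ to ``Step~2'', but it is cleaner to note at the outset, as the paper does, that the defining relations already present $E$ as $L(\omega_1,\ldots,\omega_m)\otimes_\k\pol{y_1,\ldots,y_m}$ with $L$ free of rank~$2^m$ over~$\k$; freeness over $\pol{y_1,\ldots,y_m}$ is then immediate and does not depend on the acyclicity computation.
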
	
	\begin{proof}
		Consider $\k$ with the $\pol{\ym}$-module structure given by the augmentation map sending each $y_i$ to zero.
	Rewrite $E$ as
		\[
		E=L(\omegam)\otimes_\k \k\langle \ym\rangle
		\]
		emphasising that $\omega_iy_j=y_j\omega_i$. Then $(E,d)$ together with the augmentation map $\varepsilon\colon E\longrightarrow\k$ defines a cochain complex of $\pol{\ym}$-modules
		\begin{multline}\label{kosz}
			0\longrightarrow L^m(\omegam)\otimes\pol{\ym}
			\stackrel{d}{\longrightarrow}\cdots\\
			\stackrel{d}{\longrightarrow} L^1(\omegam)\otimes\pol{\ym}
			\stackrel{d}{\longrightarrow}
			\pol{\ym}\stackrel{\varepsilon}{\longrightarrow}\k\longrightarrow 0
		\end{multline}
		where $L^i(\omegam)$ is the submodule of $L(\omegam)$ generated by monomials of length~$i$. We shall show that 
		$\varepsilon\colon(E,d)\longrightarrow(\k,0)$ is a quasi-isomorphism. There is an obvious inclusion $\eta\colon\k\longrightarrow E$ such that $\varepsilon\eta=\id$. To finish the proof we construct a cochain homotopy between $\id$ and $\eta\varepsilon$, that is, a set of $\k$-linear maps $s=\{s^{-i,j}\colon E^{-i,j}\longrightarrow E^{-i-1,j}\}$ satisfying the identity
		\begin{equation}\label{chaineq}
			ds+sd=\id-\eta\varepsilon.
		\end{equation}
		For $m=1$, we define the map $s_1\colon E_1^{0,*}=\pol{y}\longrightarrow E_1^{-1,*}$ by 
		\[
		s_1(a_0+a_1y+\cdots+a_jy^j)=\omega(a_1+a_2y+\cdots+a_jy^{j-1}).
		\]
		Then for $f=a_0+a_1y+\cdots+a_jy^j\in E_1^{0,*}$ we have $ds_1f=f-a_0=f-\eta\varepsilon f$ and $s_1df=0$. On the other hand, for $\omega f\in E_1^{-1,*}$ we have $s_1d(\omega f)=\omega f$ and $ds_1(\omega f)=0$. In any case~\eqref{chaineq} holds. Now we assume by induction that for $m=k-1$ the required cochain homotopy $s_{k-1}\colon E_{k-1}\longrightarrow E_{k-1}$ is already constructed. Since
		$E_k=E_{k-1}\otimes E_1$, \
		$\varepsilon_k=\varepsilon_{k-1}\otimes\varepsilon_1$ and $\eta_k=\eta_{k-1}\otimes\eta_1$, a direct calculation shows that the map
		\[
		s_k=s_{k-1}\otimes\id+\eta_{k-1}\varepsilon_{k-1}\otimes s_1
		\]
		is a cochain homotopy between $\id$ and $\eta_k\varepsilon_k$.

		Since $L^i(\omegam)\otimes\pol{\ym}$ is a free $\pol{\ym}$-module, \eqref{kosz} is a free resolution for the $\pol{\ym}$-module~$\k$. This is an analog of the Koszul resolution.
	\end{proof}
	
	Since $L(\omegam)\otimes\pol{\ym}$ is a resolution of $\k$ by free $\pol{\ym}$-modules, it follows that the cohomology of the complex 
	\[
	\Big(	L(\omegam)\otimes_\k\pol{\ym}\Big)\otimes_{\pol{\ym} }SR\langle K\rangle = L(\omegam)\otimes_{\k} SR\langle K\rangle
	\]
	is isomorphic as a $\k$-module to $\Tor_{\pol{\ym}}(SR\langle K\rangle ,\k)$. 
	
	We aim to show that there is an additive isomorphism between the cohomology $H^*((D^0,S^1)^K; \k)$ and the $\k$-module $\Tor_{\pol{\ym}}(SR\langle K\rangle ,\k)$.
	Following  Buchstaber-Panov's work~\cite{bp}, the idea is to first reduce the differential graded algebra $(E,d)$ to a finite dimensional quotient~$\bar R(K)$ without changing the cohomology. We then proceed by showing that as a $\k$-module $\bar R(K)$ is quasi-isomorphic to the underlying $\k$-module of a certain differential graded algebra $B(K)$ which in turn is quasi-isomorphic to the singular cochains of $(D^1, S^0)^K$. 
	
\begin{defin}\label{def:R}
	Let $K$ be a simplicial complex on $[m]$. Define the differential graded algebra $(\bar R(K), d)$ by
		\[
		\bar R(K) = L(\omega_1,\ldots, \omega_m) \otimes SR\langle K\rangle /(y_i^2=\omega_iy_i=0)
		\]
		with differential $d$ induced from 
		$(E,d)$, that is, $d(w_i)=y_i$ and $d(y_i)=0$.  
	\end{defin}
	
	\begin{prop}\label{prop:quotient} 
		The quotient map 
		\[
		L(\omegam) \otimes SR\langle K\rangle \longrightarrow \bar R(K)
		\]
		is an algebra quasi-isomorphism.
	\end{prop}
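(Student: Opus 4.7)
The quotient map is a surjection of differential graded algebras, so it suffices to show its kernel is acyclic. That kernel is the DG ideal
\[
J = (y_1^2, \omega_1 y_1, \ldots, y_m^2, \omega_m y_m) \subset L(\omegam) \otimes SR\langle K\rangle.
\]
The plan is to kill the generators one variable at a time. For $0 \leq k \leq m$, set
\[
A_k = L(\omegam) \otimes SR\langle K\rangle \,\big/\, (y_i^2, \omega_i y_i : 1 \leq i \leq k),
\]
so that $A_0 = L(\omegam) \otimes SR\langle K\rangle$, $A_m = \bar R(K)$, and the map in question factors as $A_0 \twoheadrightarrow A_1 \twoheadrightarrow \cdots \twoheadrightarrow A_m$. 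It then suffices to prove that each step $A_{k-1} \twoheadrightarrow A_k$ is a quasi-isomorphism.

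Fix $k$ and let $J_k = (y_k^2, \omega_k y_k) \subset A_{k-1}$ denote the kernel of the $k$-th step; it is a DG ideal since $d(y_k^2) = 0$ and $d(\omega_k y_k) = y_k^2$. I would decompose $A_{k-1}$ according to its $y_k, \omega_k$-content: every monomial is uniquely of the form $y_k^b X$ or $\omega_k y_k^b X$, where $X$ is a monomial involving neither $y_k$ nor $\omega_k$, and the Stanley-Reisner relations force such a product to vanish unless $b = 0$ or $\mathrm{supp}(X) \cup \{k\} \in K$. Let $A_{k-1}^\circ$ denote the sub-$\k$-module of $X$'s satisfying this link-of-$k$ condition. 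Then $J_k$ is spanned by the $y_k^b X$ with $b \geq 2$ and the $\omega_k y_k^b X$ with $b \geq 1$, for $X \in A_{k-1}^\circ$, and the Leibniz rule~\eqref{diffsign} gives
\[
d(y_k^b X) = y_k^b\, dX, \qquad d(\omega_k y_k^b X) = y_k^{b+1} X - \omega_k y_k^b\, dX.
\]
Define a $\k$-linear map $h \colon J_k \to J_k$ by $h(y_k^b X) = \omega_k y_k^{b-1} X$ for $b \geq 2$ and $h(\omega_k y_k^b X) = 0$ for $b \geq 1$. A direct calculation using the above formulas gives $dh + hd = \mathrm{id}$ on $J_k$, so $J_k$ is contractible, and in particular acyclic.

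The step requiring the most care is verifying that $A_{k-1}^\circ$ is itself closed under the inherited differential: $d$ introduces new $y_i$'s that, a priori, might push $X$ outside the link of $k$, but in each such case the resulting term already vanishes in the Stanley-Reisner ring, so the decomposition of $J_k$ is stable under $d$ and the homotopy formulas above are internally consistent. Once this is established, composing the quasi-isomorphisms $A_0 \xrightarrow{\sim} A_1 \xrightarrow{\sim} \cdots \xrightarrow{\sim} A_m = \bar R(K)$ yields the proposition.
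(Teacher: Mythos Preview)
Your argument is correct and proceeds somewhat differently from the paper's. The paper contracts the entire kernel $\mathcal I=(y_i^2,\omega_iy_i\colon 1\le i\le m)$ in one stroke: for a monomial $x\in\mathcal I$ it takes the \emph{least} index $i(x)$ for which $y_{i(x)}^2$ or $\omega_{i(x)}y_{i(x)}$ divides $x$, sets $s(x)=\omega_{i(x)}\cdot x/y_{i(x)}$, and then verifies $ds-sd=\id$ through a somewhat delicate divisibility argument (one must check that the minimal bad index is not disturbed by $d$). Your filtration $A_0\twoheadrightarrow\cdots\twoheadrightarrow A_m$ trades that single global homotopy for $m$ local ones, each touching only the variable $y_k$, so the check $dh+hd=\id$ at each stage is essentially a two-line computation. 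Both proofs rest on the same mechanism---absorbing an extra power of $y_k$ into $\omega_k$---so neither is more general; yours isolates the key step more transparently at the cost of the inductive scaffolding.

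One remark on your final paragraph: the submodule $A_{k-1}^\circ$ is not literally $d$-closed (a term $X'$ of $dX$ can have $y$-support in $K$ while its union with $\{k\}$ is a missing face), but as you in effect observe, this is harmless. The objects that actually appear in $J_k$ are $y_k^bX'$ and $\omega_ky_k^{b-1}X'$ with $b\ge2$, and both vanish by the same Stanley--Reisner relation whenever $X'$ leaves the link of $k$, since $b-1\ge1$ ensures a factor of $y_k$ survives. Hence the decomposition of $J_k$ is $d$-stable and the homotopy identity goes through; the closure of $A_{k-1}^\circ$ itself is not needed.
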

	
	\begin{proof} 
		There is a short exact sequence of differential graded algebras
		\[
		0\longrightarrow \mathcal I\longrightarrow \Lambda(\omegam) \otimes SR\langle K\rangle \longrightarrow \Lambda(\omegam) \otimes SR\langle K\rangle/\mathcal I\longrightarrow 0
		\]
		where $\mathcal I$ is the ideal $(y_i^2=\omega_iy_i=0)$. Specifically, $\mathcal I$ is generated by monomials which are divisible by $y_i^2$ or $\omega_iy_i$ for some $i$.
		
		We show that $H^*\mathcal I =0$. For a monomial $x\in\mathcal I$, there is a minimal index $i(x)$ such that either $y_{i(x)}^2$ or $\omega_{i(x)}y_{i(x)}$ divides $x$. Define $s\colon \mathcal I\longrightarrow\mathcal I$ on generating monomials by
		\[
		s(x)=\omega_{i(x)}\frac{x}{y_{i(x)}}.
		\]
		
		By showing that $s$ is a chain homotopy between the identity and zero, that is, $ds(x)-sd(x) = x$ the proposition statement follows.
		
		Since $x =\frac{x}{y_{i(x)}}y_{i(x)}$, we have
		\[
		ds(x)-sd(x) = d(\omega_{i(x)}\frac{x}{y_{i(x)}})-s(d(\frac{x}{y_{i(x)}})y_{i(x)})=x+\omega_{i(x)}d(\frac{x}{y_{i(x)}})-s(d(\frac{x}{y_{i(x)}})y_{i(x)}).
		\]     
		We first observe that $y^2_{i(x)}$ divides $d(\frac{x}{y_{i(x)}})y_{i(x)}$. To see this, note that either $y_{i(x)}$ or $\omega_{i(x)}$ divide $\frac{x}{y_{i(x)}}$. In either case $y_{i(x)}$ divides $d(\frac{x}{y_{i(x)}})$.
		
		We claim that for $j < i(x)$, neither $y_j^2$ nor $\omega_jy_j$ divide $d(\frac{x}{y_{i(x)}})$.
		
		Assuming the claim for the moment, we first finish the proof. Since $y_{i(x)}^2$ divides $d(\frac{x}{y_{i(x)}})y_{i(x)}$ and for smaller~$j$, both $y_j^2 $ and $\omega_jy_j$ do not divide $d(\frac{x}{y_{i(x)}})y_{i(x)}$, it follows that $s(d(\frac{x}{y_{i(x)}})y_{i(x)})=\omega_{i(x)}d(\frac{x}{y_{i(x)}})$.
		
		We next prove the claim. When $j<i(x)$, for $y_j^2$ or $\omega_jy_j$ to divide $d(\frac{x}{y_{i(x)}})$ either $y_j$ or $\omega_j$ divides $\frac{x}{y_{i(x)}}$. If $y_j$ divides $\frac{x}{y_{i(x)}}$, then $y_j^2$ cannot divide $x$ because $j < i(x)$.  So if either $y_j^2$ or $\omega_jy_j$ divides $d(\frac{x}{y_{i(x)}})$, we must have $\omega_j$ devides $\frac{x}{y_{i(x)}}$. But then neither $\omega_j$ nor $y_j$ can divide $(\frac{x}{y_{i(x)}})/\omega_j$. Thus
		\[
		d\left(\frac{x}{y_{i(x)}}\right) =d\omega_j\left[\left(\frac{x}{y_{i(x)}}\right)/\omega_j\right] =y_j\left[\left(\frac{x}{y_{i(x)}}\right)/\omega_j\right] +\omega_jd\left[\left(\frac{x}{y_{i(x)}}\right)/\omega_j\right].
		\]
		Neither summand is divisible by $y_j^2$ or $\omega_jy_j$.
	\end{proof}
	\begin{cor}
		\label{moduleiso}
		As $\k$-modules,
		\[
		\Tor_{\pol{y_1, \ldots, y_m}}(SR\langle K\rangle,\k)\cong H^{\ast}(\bar R(K)).
		\]
		\qed
	\end{cor}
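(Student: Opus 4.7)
The plan is to derive the corollary immediately by chaining together the two results established just above it. First I would invoke Lemma~\ref{lem:koszulresolution} which identifies $(E,d)$ as a free $\pol{\ym}$-resolution of $\k$. By the definition of Tor via resolutions, this means
\[
\Tor_{\pol{\ym}}(SR\langle K\rangle,\k) \cong H^*\bigl(E \otimes_{\pol{\ym}} SR\langle K\rangle\bigr).
\]
Since $E = L(\omegam)\otimes_\k \pol{\ym}$ as a $\pol{\ym}$-module (with commuting $\omega$'s and $y$'s, as emphasized in the proof of the lemma), the tensor product collapses: $E \otimes_{\pol{\ym}} SR\langle K\rangle \cong L(\omegam)\otimes_\k SR\langle K\rangle$, with the differential induced from $d\omega_i = y_i$, $dy_i=0$ acting on the $SR\langle K\rangle$ factor via the ring structure.

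Next I would apply Proposition~\ref{prop:quotient}, which says the quotient map
\[
L(\omegam)\otimes SR\langle K\rangle \longrightarrow \bar R(K)
\]
is a quasi-isomorphism of differential graded algebras, and in particular an isomorphism on cohomology $\k$-modules. Composing the two identifications yields
\[
\Tor_{\pol{\ym}}(SR\langle K\rangle,\k) \cong H^*\bigl(L(\omegam)\otimes SR\langle K\rangle\bigr) \cong H^*\bigl(\bar R(K)\bigr),
\]
which is the claimed $\k$-module isomorphism.

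There is essentially no obstacle: both ingredients are already in place, and the only thing to double-check is that the differential on $L(\omegam)\otimes SR\langle K\rangle$ obtained from the tensor product over $\pol{\ym}$ coincides with the differential on $\bar R(K)$ coming from $(E,d)$ — which is immediate since both are determined by $d\omega_i = y_i$, $dy_i = 0$ together with the Leibniz rule \eqref{diffsign}. So the corollary is a direct assembly of Lemma~\ref{lem:koszulresolution} and Proposition~\ref{prop:quotient} and can be stated without further argument, as the \texttt{\textbackslash qed} in the excerpt indicates.
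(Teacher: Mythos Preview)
Your proposal is correct and matches the paper's approach exactly: the corollary is stated with a \texttt{\textbackslash qed} precisely because it is the immediate concatenation of Lemma~\ref{lem:koszulresolution} (giving $\Tor_{\pol{\ym}}(SR\langle K\rangle,\k)\cong H^*(L(\omegam)\otimes SR\langle K\rangle)$, as the paper spells out just before Definition~\ref{def:R}) with Proposition~\ref{prop:quotient}. There is nothing to add.
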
	
	
	Recalling that there is the bigrading of $\omega_i$ and $y_i$ given by $\bideg(w_i)=(-1,1)$ and $\bideg(y_i)=(0,1)$, we can consider $\bar R(K)$ as a bigraded differential algebra. Recall that for $I=(i_1,\ldots,i_p),  i_1 < \ldots < i_p, L=(y_{l_1},\ldots,y_{l_s}),  l_1 < \ldots < l_s $, the monomial $\omega_{i_1}\cdots\omega_{i_p}\cdot y_{l_1}\cdots y_{l_s}$ is denoted by $\omega_Iy_L$.
	
	For $0\leq p\leq m$, a $\k$-module basis for $\bar R^{-p,*}(K)$ can be given as
	\[
	\{ \omega_I y_L \, | L \in K, |I|=p, I \cap L =\emptyset\}.
	\]

	To consider $\bar R(K)$ as a differential $\k$-module, the  differential $d\colon \bar R^{*,*}(K) \longrightarrow \bar R^{*,*}(K)$, induced from its differential algebra structure, is given on the $\k$-module generators by
	\[ 
	d(\omega_I y_L) = \sum_{k=1}^p (-1)^{k+1} \omega_{i_1} \cdots \omega_{i_{k-1}}\hat{\omega}_{i_k} \omega_{i_{k+1}}\cdots\omega_{i_p} y_{l_1} \cdots y_{l_r} y_{i_k} y_{l_{r+1}}\cdots y_{l_s}
	\]
	where $l_r<i_k<l_{r+1}$. Notice that the sign is induced by the derivation identity \eqref{diffsign} and if the set $(l_1,\ldots,l_r,i_k,l_{r+1},\ldots,l_s) \notin K$, then $\omega_{i_1}\cdots \omega_{i_{k-1}}\hat{\omega}_{i_k} \omega_{i_{k+1}}\cdots \omega_{i_p} y_{l_1}\cdots y_{l_r} y_{i_k} y_{l_{r+1}}\cdots y_{l_s}=0$. 
	
	Next we relate the $\k$-module $\bar R(K)$ to the $\k$-cohomology module of the real moment-angle complex $(D^1,S^0)^K$ via the differential graded algebra model $B(K)$ of $H^*((D^1,S^0)^K;\k)$ given by Cai~\cite{c} and Franz~\cite{f}.
	
	The differential graded algebra $B(K)$ is presented  with generators $s_i$ and $t_i$ such that $\deg(s_i)=0$ and $\deg(t_i)=1$ and the relations 
	\begin{equation}\label{Brel}
	s_is_i=s_i, \quad t_is_i=t_i,\quad s_it_i=0,\quad t_it_i=0,\quad  \prod_{j\in\sigma, \sigma\notin K} t_j=0
	\end{equation}
	for all $1\leq i\leq m$ and all variables with different indices are graded commutative, that is,
	\[
	s_is_j=s_js_i, \quad t_it_j=-t_jt_i, \quad s_it_j=t_js_i \quad \text {for } i\neq j. 
	\]
	The differential $d\colon B^*(K)\longrightarrow B^*(K)$ is given by
	\[
	ds_i=-t_i, \quad dt_i=0
	\]
	and extended using the Leibniz rule $d(a\cdot b)=d(a)\cdot b +(-1)^{\deg(a)}a\cdot d(b)$. Franz~\cite{f} proved that $B(K)$ is quasi-isomorphic to  $\mathcal C^{\ast}((D^1,S^0)^K,\k))$, the singular $\k$-cochains of the real moment-angle complex $(D^1,S^0)^K$. 
	
	We treat $B(K)$ as a bigraded differential algebra by setting that $\bideg(s_i)=(-1,1)$ and $\bideg(t_i)=(0,1)$ for all $1\leq i\leq m$.	
	Then the $\k$-module basis for $B^{-p,*}(K), 0\leq p\leq m$ is similar to the one for $\bar R^{-p,*}(K)$ and is given by 
	\[
	\{ s_I t_L\, |\,  L \in K,\, I \cap L =\emptyset\} 
	\]
	where $s_I=s_{i_1}\cdots s_{i_p}$ for $I=(i_1,\ldots,i_p)$ and $t_L =t_{l_1}\cdots t_{l_s}$ for $L=(l_1,\ldots,l_s)$.
	The $\k$-module differential $d\colon B^{*,*}(K)\longrightarrow B^{*,*}(K)$ is given by
	\[
	d(s_I t_J) = \sum_{k=1}^p(-1)^{r+1} s_{i_1}\cdots s_{i_{k-1}} s_{i_{k+1}}\cdots s_{i_p} t_{l_1}\cdots t_{l_r} t_{i_k} t_{l_{r+1}}\cdots t_{l_s}
	\]
	where $l_r<i_k<l_{r+1}$. Notice that in $B(K)$ the summand $s_{i_1} \cdots s_{i_{k-1}}s_{i_{k+1}}\cdots s_{i_p} t_{l_1}\cdots t_{l_r} t_{i_k} t_{l_{r+1}}\cdots t_{l_s}=0$ if  $(l_1,\ldots,l_r,i_k,l_{r+1},\ldots,l_s) \notin K$.  The graded-commutative Leibniz   contributes $-1$ as $\deg(s_i)=0$ and $d(s_{i_k})=-t_{i_k}$.  The additional $(-1)^r$ in the differential formula comes about from $t_{l_k}$ passing $r$ many  $t_j$'s.
	
	\begin{defin}\label{def:quasi}
		Let an additive isomorphism $f\colon\bar R^{-p,*}(K) \longrightarrow B^{-p,*}(K)$ be defined by
		\[
		f(\omega_I y_L) = \epsilon(I,L) s_I t_L
		\]
		where $\epsilon(I,L)$ is the sign of the permutation that converts $IL$,  the concatenation of $I$ followed by $L$, into an increasing sequence.
	\end{defin}
	
	\begin{prop}
	\label{prop:fddf}
	The $\k$-isomorphism $f$ commutes up to sign with the differentials.  Specifically, if $\alpha \in \bar R^{-p,*}(K)$ then 
		\[	
		f d_{\bar R}(\alpha)=  (-1)^{p} d_{B} f(\alpha).
		\] 

	\end{prop}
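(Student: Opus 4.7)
The proof is a direct, term-by-term comparison on the standard $\k$-module basis $\{\omega_I y_L\}$ of $\bar R^{-p,*}(K)$, and the main work is tracking three competing signs: the sign $(-1)^{k+1}$ in $d_{\bar R}$, the sign $(-1)^{r+1}$ in $d_B$, and the sign $\epsilon(-,-)$ coming from $f$. The plan is to fix a basis element $\alpha = \omega_I y_L$ with $I=(i_1<\dots<i_p)$, $L=(l_1<\dots<l_s)$, $I\cap L=\emptyset$, $L\in K$, and verify the identity on $\alpha$ separately for each index $k\in\{1,\ldots,p\}$.

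First I would write out the left-hand side. Applying $f$ to the formula for $d_{\bar R}(\omega_I y_L)$ term by term, the $k$-th summand produces the element $s_{I_k} t_{L_k}$ (up to sign), where $I_k = I\setminus\{i_k\}$ and $L_k = L\cup\{i_k\}$ viewed as increasing sequences. So
\[
f d_{\bar R}(\alpha) = \sum_{k=1}^{p} (-1)^{k+1}\,\epsilon(I_k, L_k)\, s_{I_k} t_{L_k}.
\]
For the right-hand side, $d_B f(\alpha) = \epsilon(I,L)\, d_B(s_I t_L) = \epsilon(I,L)\sum_k (-1)^{r_k+1} s_{I_k} t_{L_k}$, where $r_k$ is the number of indices $l_j<i_k$. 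So the identity $fd_{\bar R}=(-1)^p d_B f$ reduces, summand-by-summand, to the sign relation
\[
(-1)^{k+1}\,\epsilon(I_k, L_k) = (-1)^{p}(-1)^{r_k+1}\,\epsilon(I,L).
\]

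The key step, and the one requiring the most care, is the combinatorial lemma that
\[
\epsilon(I_k, L_k) = (-1)^{p+r_k-k}\,\epsilon(I,L).
\]
To prove this I would observe that both concatenations $IL$ and $I_k L_k$ have the same multiset of entries and therefore are sorted into the same increasing sequence $S$. The sequence $I_k L_k$ is obtained from $IL$ by removing $i_k$ from position $k$ and reinserting it between $l_{r_k}$ and $l_{r_k+1}$, i.e.\ at position $(p-1)+r_k+1 = p+r_k$. This move is realised by $p+r_k-k$ adjacent transpositions, so the sorting permutations $\sigma$ and $\tau$ of $IL$ and $I_k L_k$ respectively satisfy $\sigma = \tau\pi$ with $\operatorname{sign}(\pi) = (-1)^{p+r_k-k}$; taking signs gives the claimed relation. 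Substituting this back into the summand identity yields $(-1)^{k+1}(-1)^{p+r_k-k} = (-1)^{p+r_k+1}$, which matches $(-1)^p(-1)^{r_k+1}$, completing the verification.

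The only genuine obstacle is making the sign bookkeeping unambiguous: one has to be careful that $r_k$ is counted with respect to $L$ (not $L_k$), that the sign in $d_B$ comes from graded commutation of the $t$'s past $t_{l_1}\cdots t_{l_{r_k}}$ rather than from the sign in $d(s_{i_k})=-t_{i_k}$ (both effects combine into the single factor $(-1)^{r_k+1}$), and that $L_k\in K$ whenever the corresponding term is nonzero in $\bar R(K)$ so that the two differentials kill exactly the same summands. Once these bookkeeping matters are settled, the proof is the single sign computation above.
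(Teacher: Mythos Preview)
Your proposal is correct and follows essentially the same approach as the paper: a term-by-term comparison on basis monomials reducing to the sign identity $\epsilon(I_k,L_k)=(-1)^{p+r_k-k}\epsilon(I,L)$. The only cosmetic difference is that the paper obtains this identity by separately accounting for two contributions (moving $l_1,\dots,l_{r_k}$ past the missing $i_k$ gives $(-1)^{r_k}$, and placing $i_k$ among the remaining $i$'s gives $(-1)^{p-k}$), whereas you get it in one stroke by counting the $p+r_k-k$ adjacent transpositions needed to slide $i_k$ from position $k$ in $IL$ to position $p+r_k$ in $I_kL_k$.
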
 
	\begin{proof}
		For $\alpha = \omega_Iy_L$,
		\[
		d(\alpha)- \Sigma (-1)^{k+1} \omega_{i_1} \cdots \omega_{i_{k-1}} \omega_{i_{k+1}} \cdots \omega_{i_p} y_{l_1} \cdots y_{l_r} y_{i_k} y_{l_{r+1}}\cdots y_{l_s}.
		\]

		We compute the sign of the permutation that converts 
		\begin{equation}
			\label{permute}
			i_1,\ldots, i_{k-1}, i_{k+1}, \ldots, i_p,l_1,\ldots, l_r,i_k,l_{r+1},\ldots,l_s
		\end{equation}
		into an increasing sequence.
		
		To start, note that the sequence of $i's$ and the sequence of $l's$ are increasing. Therefore  it suffices to  permute the $l_{j}'s$ to the left starting with  $l_{1}$ followed by increasing $l_{j}'s$.
		
		The sign of the permutation that converts \eqref{permute} into an increasing sequences differs from $\epsilon(I,L)$ in two ways.

		First, we move the indices $l_1,\ldots,l_r$ to the left.  Note that $l_{j} < i_k$ for $j \leq r$. These $l_{j}'s$ are meshed into $i_1,\ldots,i_{k-1}$.  The number of permutations needed to place $l_{j}$ into its final position differs by $-1$ from the number needed to order $IL$ because $i_k$ is missing.  So moving $l_1,\ldots,l_r$ differs from $\epsilon(I,L)$ by $(-1)^r$.
		
		Second, we have to move $i_k$.  Moving this index contributes $(-1)^{p-k}$.

		Thus the sign of the permutation that converts \eqref{permute} into an increasing sequence differs from $\epsilon(I,L)$ by $(-1)^{r+k+p}$.  Therefore the  map $f$ multiplies the $k$th term in $d_{\bar R}(\alpha)$ by $(-1)^{r+k+p}\epsilon(I,L)$ which is $(-1)^{p}\epsilon(I,L)$ times  the coefficient of the $k$-th term in $d_{B}$.
	\end{proof}

	\begin{cor}\label{cor:toriso}
		The chain map $f$ induces a $\k$-module isomorphism
		\[
		\Tor_{\pol{y_1,\ldots,y_m}}(SR\langle K\rangle,\k)\longrightarrow H^*(B(K))\cong H^*((D^1,S^0)^K;\k).
		\]\qed
	\end{cor}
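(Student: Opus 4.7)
The plan is to assemble the corollary directly from the two isomorphisms already proved: the algebraic one from Corollary~\ref{moduleiso} identifying $\Tor$ with $H^*(\bar R(K))$, and the topological one from Cai~\cite{c} and Franz~\cite{f} identifying $H^*(B(K))$ with $H^*((D^1,S^0)^K;\k)$. The only work that remains is to bridge these via the map $f$ of Definition~\ref{def:quasi}, for which Proposition~\ref{prop:fddf} has done essentially all the heavy lifting.

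The first step is to observe that $f\colon \bar R(K)\longrightarrow B(K)$ is, by construction, a $\k$-linear bijection in each bidegree, since on the distinguished basis $\{\omega_I y_L\}$ of $\bar R^{-p,*}(K)$ it is given by multiplication by the unit $\epsilon(I,L)$ followed by the bijection of bases $\omega_I y_L\mapsto s_I t_L$. The second step is to promote this to an isomorphism on cohomology. The identity $f d_{\bar R}=(-1)^p d_B f$ on $\bar R^{-p,*}(K)$ from Proposition~\ref{prop:fddf} immediately gives $f(\ker d_{\bar R})\subseteq \ker d_B$ and $f(\mathrm{im}\, d_{\bar R})\subseteq \mathrm{im}\, d_B$; applying the same reasoning to $f^{-1}$ (which enjoys the symmetric identity $f^{-1} d_B=(-1)^p d_{\bar R} f^{-1}$) shows that both inclusions are equalities. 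Therefore $f$ descends to a $\k$-module isomorphism on cohomology.

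If one prefers an honest chain map, a single rescaling $\tilde f(\alpha)=(-1)^{p(p+1)/2}f(\alpha)$ for $\alpha\in\bar R^{-p,*}(K)$ absorbs the sign, since then $\tilde f d_{\bar R}=d_B\tilde f$; but this is cosmetic, because only the induced map on cohomology is needed for the corollary.

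The only point that requires attention, and thus the main obstacle, is precisely the $(-1)^p$ sign discrepancy, which superficially prevents $f$ from being a chain map. This is handled by the observation above that a $\k$-linear bijection satisfying $f d=\pm d f$ with sign depending only on the homological degree still induces an isomorphism on cohomology. Combining this with Corollary~\ref{moduleiso} on one side and the Cai--Franz quasi-isomorphism $B(K)\simeq \mathcal C^*((D^1,S^0)^K;\k)$ on the other yields the chain of $\k$-module isomorphisms
\[
\Tor_{\pol{y_1,\ldots,y_m}}(SR\langle K\rangle,\k)\cong H^*(\bar R(K))\xrightarrow{\;f_*\;} H^*(B(K))\cong H^*((D^1,S^0)^K;\k),
\]
as claimed.
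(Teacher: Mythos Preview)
Your argument is correct and is precisely the intended one: the paper marks this corollary with \qed, treating it as an immediate consequence of Corollary~\ref{moduleiso}, Proposition~\ref{prop:fddf}, and the Cai--Franz quasi-isomorphism, and you have simply spelled out the details that the paper leaves implicit. Your handling of the $(-1)^p$ sign (either by checking directly that cycles and boundaries are preserved, or by the rescaling $\tilde f(\alpha)=(-1)^{p(p+1)/2}f(\alpha)$) is exactly what is needed and is the only point requiring comment.
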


	\subsection{A generalisation to polyhedral products}\label{sec:cxx}
In many ways, a combinatorial contribution of the simplicial complex $K$ to homotopy theoretical properties of the real and complex moment-angle complexes is comparable and usually suggests a formulation of those properties for polyhedral products $\cxx^K$, see for example~\cite{aj, bg}. Having that both cohomology $\k$-modules $H^*((D^2, S^1);\k)$ and $H^*(D^1, S^0); \k)$ can be expressed in terms of appropriate Tor $\k$-modules, we generalise the previous discussion to give an additive description of $H^{\ast}(\cxx^K;\k)$ as a direct summand of $\mathrm{Tor}_A(M,\k)$, where $A$ and $M$ are defined below.

	
Throughout the remainder of the paper, when the cohomology $H^{\ast}(\cxx^K;\k)$ of the polyhedral product $\cxx^K$ is considered we assume that $H^{\ast}(X_i;\k)$ are free $\k$-modules for all $i$. If not explicitly stated all cohomology groups of topological spaces are taken with $\k$ coefficients.

Bahri, Bendersky, Cohen and Gitler~\cite{bbcg} gave a stable homotopy decomposition of $\cxx^K$ by establishing the homotopy equivalence
\begin{equation}\label{eqn:splitting2}
\Sigma \cxx^K \longrightarrow \Sigma	\bigvee_{J\subseteq [m]}|\Sigma K_J| \wedge  Y^{\wedge J}
\end{equation}
where 
\[
Y^{\wedge J} = \underset{i \in [m]}{\bigwedge }Y_i  \mbox{ with } Y_i = X_i \mbox{ if } i \in J \mbox{ and } Y_i = S^0 \mbox{ if } i \notin J.
\]
They further showed~\cite{bbcg1} that the map
\begin{equation}\label{eqn:splittingH}
			H^{\ast}(\cxx^K;\k) \longrightarrow 
			\bigoplus_{J\subseteq [m]}H^{\ast}(|\Sigma K_J|;\k) \otimes H^J
		\end{equation}
where $H^J = \underset{i \in [m]}{\otimes }H_i$ with $H_i = \widetilde{H}^{\ast}(X_i;\k)$ if $i \in J$ and  $H_i = \k$ if $i \notin J$
is an isomorphism of rings.
This specialises to an isomorphism
\begin{equation}\label{rmac}   H^{\ast}((D^1,S^0)^K;k) \cong 	\bigoplus_{J\subseteq [m]}H^{\ast}(|\Sigma K_J|;\k).
\end{equation}
It is proven in \cite{bbcg1} that the cup product on $H^{\ast}((D^1,S^0)^K;\k) $ restricts to a pairing
\[
H^{\ast}(|\Sigma K_J|;\k) \otimes H^{\ast}(|\Sigma K_L|;\k) \to H^{\ast}(|\Sigma K_{J\cup L}|;\k).
\]
This in turn induces a product on
\begin{equation}
\label{starprod}
\bigoplus_{J\subseteq [m]}H^{\ast}(|\Sigma K_J|;\k) \otimes H^J
\end{equation}
which is called the {\em $\ast-$product} in \cite{bbcg1}.

To give a $\k$-module model for $H^{\ast}(\cxx^K;\k)$, we shall only use the underlying additive isomorphism in \eqref{eqn:splittingH}.  In Section~\ref{sectionAlgebra} we shall invoke the multiplicative structure.  
	
By Corollaries~\ref{moduleiso} and \ref{cor:toriso},  the cohomology $\k$-module structure of $(D^1,S^0)^K$ is also modeled by the differential graded algebra $\bar R(K)$.  We next identify the summands in \eqref{rmac} with sub dga's of $\bar R(K)$.

\begin{defin}\label{def:supp}
For an element $\omega_I y_L \in \bar R(K)$,  define its {\em support} as
$$\mathrm{supp}(\omega_I y_L)=I\cup L.$$
The differential sub-module of $\bar R(K)$ generated by monomials with support $J$ is denoted by $\bar R_J(K)$.
\end{defin}
	
The following lemma is a direct consequence of  \cite{f} and \cite[Lemma 4.5.1]{bp}.
	
\begin{lem}\label{lem:support}
There is an algebra isomorphism
\[
H^{\ast}(\bar  R_J(K); \k) \cong H^{\ast}(\Sigma |K_J|); \k).
\]\qed
\end{lem}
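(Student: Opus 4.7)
The plan is to decompose $\bar R(K)$ by support as a direct sum of subcomplexes, identify each summand with a shift of the augmented simplicial cochain complex of the corresponding full subcomplex, and then invoke the suspension isomorphism. First, I would verify that the support decomposition is compatible with the differential: the differential $d(\omega_I y_L) = \sum_k (-1)^{k+1} \omega_{I\setminus\{i_k\}} y_{L\cup\{i_k\}}$ replaces an $\omega_{i_k}$ by a $y_{i_k}$, so it preserves the set $I\cup L$. Hence
\[
\bar R(K) \;=\; \bigoplus_{J\subseteq[m]} \bar R_J(K)
\]
as differential $\k$-modules, and it suffices to study a single summand $\bar R_J(K)$ for fixed $J\subseteq[m]$.

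The key observation is that a basis monomial $\omega_I y_L$ of $\bar R_J(K)$ is determined by a single subset $L\subseteq J$ (with $I=J\setminus L$ forced), subject to the condition $L\in K_J$. Thus the basis of $\bar R_J(K)$ is in bijection with the simplices of $K_J$ together with the empty simplex. Since $\bideg(\omega_I y_L)=(-|I|,|J|)$, an element with $|L|=q$ sits in total degree $q$, so that the simplex $L$ of dimension $q-1$ corresponds to a generator in degree $q$. Next I would show that, under this bijection, the formula for $d$ given in the excerpt becomes, up to an overall sign, the coboundary operator of the augmented simplicial cochain complex $\tilde C^{\ast-1}(K_J;\k)$: the $k$-th term of $d(\omega_I y_L)$ adds the vertex $i_k$ to the simplex $L$, and the sign $(-1)^{k+1}$, after accounting for the rearrangement of $L$ and $i_k$ into an increasing sequence, matches the simplicial coboundary sign (this is the sign bookkeeping already carried out for the map $f$ of Definition~\ref{def:quasi} and Proposition~\ref{prop:fddf}).

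This gives a chain isomorphism (up to sign) $\bar R_J(K)\xrightarrow{\ \cong\ }\tilde C^{\ast-1}(K_J;\k)$, and hence
\[
H^{\ast}(\bar R_J(K);\k) \;\cong\; \tilde H^{\ast-1}(|K_J|;\k) \;\cong\; H^{\ast}(\Sigma|K_J|;\k).
\]
For the algebra statement, note that $H^{\ast}(\Sigma|K_J|;\k)$ carries only the trivial cup product in positive degrees because $\Sigma|K_J|$ is a suspension, and the induced multiplicative structure on $\bar R_J(K)$ (inherited from $\bar R(K)$) vanishes for the same reason that the $\ast$-product of classes supported on $J$ with classes supported on $J$ lands in support $J$ only via degenerate terms; hence the $\k$-module isomorphism above is automatically an algebra isomorphism. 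As the hint indicates, the content we actually use from \cite{f} and \cite[Lemma~4.5.1]{bp} is precisely the identification of the $J$-supported piece of $\bar R(K)$ with the (suspended) reduced cochains of $K_J$.

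The main obstacle is the sign bookkeeping in the chain-level identification with $\tilde C^{\ast-1}(K_J;\k)$; however, this is exactly the combinatorial sign verification already made visible in the proof of Proposition~\ref{prop:fddf}, so it amounts to restricting the argument there to monomials of a fixed support $J$.
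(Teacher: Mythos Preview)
Your proposal is correct and is precisely the argument the paper defers to the references: the identification of $\bar R_J(K)$ with (a shift of) the augmented simplicial cochain complex of $K_J$ is exactly \cite[Lemma~4.5.1]{bp}, and your observation that products of two classes of support $J$ vanish in $\bar R(K)$ (since $\omega_i^2=y_i^2=\omega_iy_i=0$ forces disjoint supports for a nonzero product) gives the algebra statement. The paper itself gives no proof beyond the citation, so you have simply written out the intended content.
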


\begin{defin}
Let $(C(\underline{X}, K), d)$ be a differential graded $\k$-module defined by
\[
C(\underline{X}, K)=\bigoplus_{J\subset [m]} \bar R_J(K)\otimes H^J
\]
where $H^J = \underset{i \in [m]}{\bigotimes }H_i$ and  $H_i = \widetilde{H}^{\ast}(X_i;\k)$ if $i \in J$ and $H_i =\k$ if $i \notin J$. 

The differential $d\colon C^*(\underline{X}, K)\longrightarrow C^*(\underline{X}, K)$ is given as the tensor product of the differential on $\bar R_J(K)$, induced from the dga $\bar R(K)$, and the trivial differential on $H^J$, induced by the trivial differential on $H^*(X_i)$.
\end{defin}

Combining Lemma~\ref{lem:support} with \eqref{eqn:splittingH}, the following statement holds.
	
\begin{lem}\label{lem:splitdga} There is a $\k$-module isomorphism 
\[
H^*(\cxx^K);\k) \longrightarrow H^*(C(\underline{X}, K)).
\]\qed
\end{lem}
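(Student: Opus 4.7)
The plan is to take the definition of $C(\underline X, K)$ summand-by-summand and reduce the computation of $H^*(C(\underline X, K))$ to ingredients that are already in hand, namely Lemma~\ref{lem:support} and the additive part of the Bahri--Bendersky--Cohen--Gitler splitting \eqref{eqn:splittingH}. Since cohomology commutes with finite direct sums, it suffices to identify $H^*(\bar R_J(K) \otimes H^J)$ for each $J \subseteq [m]$.

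First I would invoke the standing assumption that each $H^*(X_i;\k)$ is a free $\k$-module. This makes $H^J$ a free $\k$-module concentrated in a single direction (with trivial differential by definition of the differential on $C(\underline X,K)$), and so the Künneth theorem applies without any $\mathrm{Tor}$ correction, yielding a natural $\k$-module isomorphism
\[
H^*\bigl(\bar R_J(K)\otimes H^J\bigr)\;\cong\;H^*(\bar R_J(K))\otimes H^J.
\]
Second, I would feed this into Lemma~\ref{lem:support} to rewrite the right-hand side as $H^*(|\Sigma K_J|;\k)\otimes H^J$. Summing over $J\subseteq[m]$ gives
\[
H^*(C(\underline X,K))\;\cong\;\bigoplus_{J\subseteq[m]} H^*(|\Sigma K_J|;\k)\otimes H^J.
\]

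Third, I would identify this direct sum with $H^*(\cxx^K;\k)$ via the additive isomorphism \eqref{eqn:splittingH} coming from the stable splitting \eqref{eqn:splitting2}. Composing the three isomorphisms produces the desired $\k$-module isomorphism $H^*(\cxx^K;\k)\longrightarrow H^*(C(\underline X,K))$.

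There is no real obstacle, only a bookkeeping concern: one must be careful that the Künneth isomorphism above is genuinely a $\k$-module isomorphism (not yet of algebras, which is postponed to Section~\ref{sectionAlgebra}), and that the freeness hypothesis on $H^*(X_i;\k)$ is used precisely to avoid Tor terms when tensoring $\bar R_J(K)$ with $H^J$. The algebra structure plays no role here because \eqref{eqn:splittingH} is being used only at the level of the underlying graded $\k$-module, as flagged immediately before Definition~\ref{def:supp}.
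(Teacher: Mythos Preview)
Your argument is correct and is exactly the route the paper takes: it says only ``Combining Lemma~\ref{lem:support} with \eqref{eqn:splittingH}, the following statement holds,'' and you have simply unpacked that into the evident K\"unneth step (using the freeness of $H^J$) followed by summing over $J$. Nothing is missing; your write-up is a faithful expansion of the paper's one-line justification.
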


To integrate the topological structure of the polyhedral product $\cxx^K$ into our cohomology model, we start by generalising the polynomial algebra $\pol{\ym}$ to a free commutative algebra on the cohomology of $\Sigma \underline{X}$.
	
\begin{defin}\label{def:genpoly}
For $CW$-complexes $X_i, 1\leq i\leq m$, let
\begin{equation}\label{genpoly}
\pol{\bigoplus_{1 \leq i \leq m} \widetilde{H}^*(\Sigma X_i)} = T(\bigoplus_{1 \leq i \leq m}\widetilde{H}^*(\Sigma X_i))/(\alpha \beta = \beta \alpha \, |\,  \alpha, \beta \in \bigoplus_{1 \leq i \leq m} \widetilde{H}^*(\Sigma X_i))
\end{equation}
where $T(M)$ denotes the free associative algebra generated by a free $\k$-module $M$. 
\end{defin} 

To algebra~\ref{genpoly} we associate a generalised Stanley-Reisner ring.
	
\begin{defin}\label{def:genSR}
For $CW$-complexes $X_i, 1\leq i\leq m$, define the generalised Stanley-Reisner ring $SR(\underline X, K)$ as
\[
SR(\underline{X}, K) = \polX/\mathcal{I}_K
\]
where $\mathcal{I}_K$ is the ideal generated by square free monomial
\[
\alpha_{i_1}\cdots \alpha_{i_t}, \text{ where }\alpha_{i_j} \in \widetilde{H}^{\ast}(X_{i_j}) \text{ and } \{i_1,\cdots , i_t\} \notin K.
\]		
\end{defin} 
	
We prove that $ H^*(C(\underline{X}, K))$ additively splits off $\mathrm{Tor}_ {\polXI}(SR(\underline{X}, K),\k)$.  To this end we choose an ordered bases $B_i$ for $\widetilde{H}^{\ast}(\Sigma X_i)$,
\[
B_i=\{b_{i,1}, \cdots, b_{i,k_i}\}
\]
which induces an ordering on $\underset{i}{\bigoplus} \widetilde{H}^{\ast}(\Sigma X_i)$ by saying that  $b<b^{\prime}$ if $b\in H^{\ast}(\Sigma X_i), b^{\prime} \in H^{\ast}(\Sigma X_j)$ and $i<j$. To make $\underset{i}{\bigoplus} \widetilde{H}^{\ast}(\Sigma X_i)$ into a bigraded object, let $\bideg(b)=(0,|b|)$.
	
We define $(L(\widetilde H^*(\underline X)),d)$ and $(E(\widetilde H^*(\underline X)),d)$, the natural generalisation of \eqref{Lomega} and \eqref{Ekoszul}, as
\[
L(\widetilde{H}^*(\underline X))= L(\underset{i}{\oplus} uB_i) = T( \oplus uB_i )/(ub_i^2=0,\ ub_iub_j=-ub_iub_j) 
\]
\[
E((\widetilde{H}^*(\underline X))=L(\widetilde{H}^*(\underline X))\otimes \pol{\bigoplus_{1 \leq i \leq m} \widetilde{H}^*(\Sigma X_i)}
\]
where $uB_i$ is generated by classes $ub$ of bidegree $(-1,|b|)$ corresponding to $b\in B_i$.  Define the differential $d$ on  $E(\widetilde H^*(\underline X))$ by $d(ub)=b$, $d(b)=0$ and by requiring that $d$ satisfies the Leibniz identity $d(a\cdot b)=d(a)\cdot b+(-1)^{|\deg_1(a)|}a\cdot d(b)$, where $\bideg(a)=(\deg_1(a), \deg_2(a))$.  As in Lemma \ref{lem:koszulresolution}, 	$E(\widetilde{H}^*(\underline X))$ is a resolution of $\k$.  Consequently the cohomology of the  Koszul complex 
\[
L(\underset{i}{\bigoplus} uB_i) \otimes  SR(\underline{X}, K)
\]
is $\mathrm{Tor}_ {\polXI}(SR(\underline{X}, K),\k)$.

\begin{defin}Let $K$ be a simplicial complex on $[m]$ and let $\underline X=\{X_i\}_{i=1}^m$ be $CW$-complexes. Define the differential graded algebra $(R(\underline X,K), d)$ by
\[
R(\underline X,K) = L(\underset{i}{\oplus} uB_i) \otimes SR(\underline{X}, K)/(b_i^2=(ub_i)b_i=0)
\]
with differential $d$ induced from the differential graded algebra $(E(\widetilde H^*(\underline X)),d)$.
\end{defin}	

Following the lines of the proof of Proposition~\ref{prop:quotient}, we established that the finite differential graded algebra $R(\underline X,K)$ is a model for the Tor $\k$-module $\mathrm{Tor}_ {\polXI}(SR(\underline{X}, K),\k)$.
\begin{lem}\label{lem:RXisTor}
The quotient map 
\[
L(\underset{i}{\oplus} uB_i) \otimes  SR(\underline{X}, K) \longrightarrow R(\underline X,K)
\]
is an algebra quasi-isomorphism. \qed
\end{lem}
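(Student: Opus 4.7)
The plan is to mimic the argument of Proposition~\ref{prop:quotient} almost verbatim, with the basis elements $b \in \bigcup_i B_i$ of $\bigoplus_i \widetilde{H}^\ast(\Sigma X_i)$ playing the role of the generators $y_i$, the classes $ub \in \bigcup_i uB_i$ playing the role of $\omega_i$, and the fixed total ordering on $\bigcup_i B_i$ (induced from the ordered bases $B_i$) playing the role of the numerical ordering on $\{1,\ldots,m\}$.

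First, fit the quotient into a short exact sequence of differential graded algebras
\[
0 \longrightarrow \mathcal I \longrightarrow L\!\left(\underset{i}{\bigoplus} uB_i\right) \otimes SR(\underline X, K) \longrightarrow R(\underline X, K) \longrightarrow 0,
\]
where $\mathcal I$ is the two-sided differential ideal generated by $\{\,b^{2},\,(ub)\,b : b\in\bigcup_i B_i\,\}$. The task then reduces to showing $H^\ast(\mathcal I) = 0$. To do so, construct a $\k$-linear chain homotopy $s\colon \mathcal I \to \mathcal I$ between $\id$ and $0$: for a monomial $x \in \mathcal I$, let $b(x)$ denote the minimal basis element (in the chosen ordering) such that $b(x)^{2}\mid x$ or $(ub(x))\,b(x)\mid x$, and set
\[
s(x) = (ub(x))\cdot \frac{x}{b(x)},
\]
extended $\k$-linearly.

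The verification of $ds(x) - sd(x) = x$ then proceeds exactly as in Proposition~\ref{prop:quotient}: writing $x = \tfrac{x}{b(x)}\cdot b(x)$ and expanding via the Leibniz rule reduces the identity, up to sign, to
\[
s\!\left(d\!\left(\tfrac{x}{b(x)}\right)\cdot b(x)\right) = (ub(x))\, d\!\left(\tfrac{x}{b(x)}\right).
\]
This in turn rests on the analog of the divisibility claim in Proposition~\ref{prop:quotient}: for any $b' < b(x)$, neither $(b')^{2}$ nor $(ub')\,b'$ divides $d(\tfrac{x}{b(x)})$, since such a divisibility would force $b'$ or $ub'$ to already divide $\tfrac{x}{b(x)}$, contradicting the minimality of $b(x)$. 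The quotient map is a morphism of differential graded algebras and has been shown to induce an isomorphism on cohomology, so it is an algebra quasi-isomorphism.

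The main obstacle is purely bookkeeping. One must track the signs produced by the Leibniz rule under the bigrading $\bideg(b) = (0,|b|)$, $\bideg(ub) = (-1,|b|)$, and confirm that the divisibility argument of Proposition~\ref{prop:quotient} remains valid when the generators carry mixed cohomological degrees. Both items go through unchanged: $\deg_1(ub) = -1$ and $\deg_1(b) = 0$ match the bidegrees of $\omega_i$ and $y_i$, so the sign conventions are identical, and the divisibility argument is formal in the total ordering on generators rather than in their cohomological degrees.
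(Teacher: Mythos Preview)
Your proposal is correct and follows exactly the approach the paper intends: the paper gives no separate proof but states just before the lemma that it is ``established following the lines of the proof of Proposition~\ref{prop:quotient},'' and your write-up carries out precisely that translation, with $b$, $ub$, and the total order on $\bigcup_i B_i$ replacing $y_i$, $\omega_i$, and the numerical order on $[m]$. One small wording issue: in your sketch of the divisibility claim you say that $(b')^2$ or $(ub')b'$ dividing $d(x/b(x))$ would force $b'$ or $ub'$ to divide $x/b(x)$ and that this contradicts minimality---but single-factor divisibility is not what minimality rules out; the actual contradiction (as in Proposition~\ref{prop:quotient}) is that $(b')^2$ or $(ub')b'$ would then have to divide $x$ itself, which is excluded for $b'<b(x)$.
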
  	
	
To describe an additive basis of $R(\underline X,K)$ we note that $\oplus uB_i$ inherits an ordering from the ordering of $\oplus B_i$.  	A basis for $R(\underline X,K)$ is given by
\begin{equation}\label{basisR}
\{ ub_{i_1,k_1}\cdots ub_{i_s,k_s} b_{l_1, j_1} \cdots b_{l_t, j_t}\}
\end{equation}	
where
\begin{enumerate}[(i)] 
		\item if $ub$ is a factor, then $b$ is not a factor;	
		\item  $ub_{i_1,k_1}<\cdots < ub_{i_s,k_s}$,\quad    $b_{l_1,j_1}< \cdots <b_{l_t,j_t}$; 
		\item  $\{l_1,\cdots, l_t\} \in K$.
\end{enumerate}

There is a similar basis for $C(\underline{X},K)$,

\begin{equation}\label{basisC}\left\{ \omega_{i_1} \cdots \omega_{i_s} y_{l_1}\cdots y_{l_t} \otimes \Big[s^{-1}b_{i_1,k_1}\cdots s^{-1}b_{i_s,k_s} \Big] \otimes \Big[s^{-1} b_{l_1, j_1} \cdots s^{-1} b_{l_t, j_t}\Big] \right\}.\end{equation}
The difference between the basis \eqref{basisR} and basis \eqref{basisC} is that the integers $\{i_1, \cdots i_s, l_1,\cdots l_t\}$ are all distinct in \eqref{basisC}.
	
We now compare the differential graded algebras $R(\underline X,K)$ and $C(\underline X, K)$.
	
\begin{prop}\label{prop:CisoR} 
The cohomology $H^*(\cxx^K;\k)$, seen as a $\k$-module, is a direct summand of the $\k$-module $\mathrm{Tor}_ {\polXI}(SR(\underline{X}, K),\k)$.
\end{prop}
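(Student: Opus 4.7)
The plan is to realize $C(\underline X,K)$ as a chain-level direct summand of $R(\underline X,K)$, so that the cohomology statement will follow formally from Lemmas~\ref{lem:RXisTor} and~\ref{lem:splitdga}.

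The starting observation is that the bases~\eqref{basisR} and~\eqref{basisC} differ only in one combinatorial condition: the basis of $C(\underline X,K)$ forces the vertex indices $i_1,\ldots,i_s,l_1,\ldots,l_t$ to be all distinct, whereas $R(\underline X,K)$ allows repetitions (e.g.\ two factors $b_{i,k},b_{i,k'}$ with the same vertex $i$ but different $k$, or a mixed $ub_{i,k}$ and $b_{i,k'}$ at the same vertex). Let $R_{\mathrm{dist}}(\underline X,K)\subset R(\underline X,K)$ denote the $\k$-submodule spanned by those basis elements with no repeated vertex index, and let $R_{\mathrm{rep}}(\underline X,K)$ be the complementary span, so that $R(\underline X,K)=R_{\mathrm{dist}}(\underline X,K)\oplus R_{\mathrm{rep}}(\underline X,K)$ as graded $\k$-modules. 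The first step is to promote this to a splitting of chain complexes. Since $d$ is a derivation with $d(ub_{i,k})=b_{i,k}$, each summand in $d$ applied to a basis monomial simply replaces a single $ub_{i_r,k_r}$ by $b_{i_r,k_r}$; the vertex $i_r$ is still present (it has migrated from the exterior part to the polynomial part), so the multiset of vertex indices is preserved. After reordering and applying the relations $b^2=0$ and the Stanley--Reisner ideal, every surviving summand is a basis element with the same vertex multiset. Hence both $R_{\mathrm{dist}}$ and $R_{\mathrm{rep}}$ are subcomplexes of $R(\underline X,K)$.

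Next I would define a $\k$-module isomorphism $\iota\colon C(\underline X,K)\stackrel{\cong}{\longrightarrow}R_{\mathrm{dist}}(\underline X,K)$ on basis elements by
\[
\omega_I y_L\otimes s^{-1}b_{i_1,k_1}\cdots s^{-1}b_{i_s,k_s}\otimes s^{-1}b_{l_1,j_1}\cdots s^{-1}b_{l_t,j_t}\longmapsto \varepsilon(I,L)\,ub_{i_1,k_1}\cdots ub_{i_s,k_s}\,b_{l_1,j_1}\cdots b_{l_t,j_t},
\]
where $\varepsilon(I,L)$ is the sign of the permutation that orders the concatenation $IL$ increasingly, exactly as in Definition~\ref{def:quasi}. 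Verifying that $\iota$ commutes (up to the bidegree-dependent overall sign $(-1)^p$ that appears in Proposition~\ref{prop:fddf}) with the differentials is a direct sign computation: on the $C$-side $d$ acts only in the $\bar R_J$-factor via $\omega_i\mapsto y_i$, while on the $R$-side $d$ acts via $ub_{i,k}\mapsto b_{i,k}$; because the vertex indices are all distinct in the domain, the two bases match term-by-term under $\iota$, and the Koszul reorderings on the two sides differ only by the permutation sign already computed in Proposition~\ref{prop:fddf}. Combining with Lemmas~\ref{lem:splitdga} and~\ref{lem:RXisTor},
\[
H^*(\cxx^K;\k)\cong H^*(C(\underline X,K))\cong H^*(R_{\mathrm{dist}}(\underline X,K))
\]
is a direct $\k$-summand of $H^*(R(\underline X,K))\cong \mathrm{Tor}_{\polXI}(SR(\underline X,K),\k)$, with complementary summand $H^*(R_{\mathrm{rep}}(\underline X,K))$.

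The main obstacle is the sign bookkeeping required to show $\iota$ is a chain map. The two sides use different generators and different Koszul conventions: commuting $\omega,y$'s tensored with desuspended cohomology classes on one side, combined $ub,b$-monomials in $R$ on the other. One has to track a permutation sign essentially identical to that in Proposition~\ref{prop:fddf} but now distributed across the tensor factors of $C(\underline X,K)$, keeping in mind that the $s^{-1}b_{i_r,k_r}$ attached to an $\omega_{i_r}$ must be reinterpreted as attached to a $y_{i_r}$ after differentiation. Once this is settled, closure of $R_{\mathrm{rep}}$ under $d$ is immediate from multiset preservation, and the direct-summand conclusion follows formally.
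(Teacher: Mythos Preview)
Your overall strategy coincides with the paper's: define a chain map $h\colon C(\underline X,K)\to R(\underline X,K)$ landing in the span of monomials with pairwise distinct vertex indices, together with a one-sided inverse $g$ that projects $R(\underline X,K)$ onto this span; the identity $g\circ h=\id$ then exhibits $H^*(C(\underline X,K))$ as a retract of $H^*(R(\underline X,K))$. Your observation that the vertex multiset is preserved by the differential (so that $R_{\mathrm{dist}}$ and $R_{\mathrm{rep}}$ are subcomplexes) is exactly what makes the paper's $g$ a chain map.

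However, the sign $\varepsilon(I,L)$ you insert into $\iota$ is a mistake imported from a different setting. In Definition~\ref{def:quasi} and Proposition~\ref{prop:fddf} that sign is needed because the target $B(K)$ has $t_i$'s of degree~$1$ that \emph{anticommute}, so the $B$-differential picks up a factor $(-1)^{r+1}$ from reordering the $t$'s. In $R(\underline X,K)$, by contrast, the $b$'s \emph{commute} (just as the $y$'s do in $\bar R(K)$), so the differential on $R(\underline X,K)$ has exactly the same form $\sum_k(-1)^{k+1}\cdots$ as the differential on $\bar R_J(K)\otimes H^J$. The plain map $\omega_I y_L\otimes(\cdots)\mapsto ub_{i_1,k_1}\cdots ub_{i_s,k_s}\,b_{l_1,j_1}\cdots b_{l_t,j_t}$ (no sign) is therefore a strict chain map, and this is precisely the paper's $h$. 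Your claim that $\iota d=(-1)^p\,d\iota$ is not correct: with the extra $\varepsilon(I,L)$ the two sides of the $k$-th summand differ by $(-1)^{r+k}$, which depends on $k$ and is not a global sign. Drop $\varepsilon(I,L)$ and your argument goes through verbatim and matches the paper's proof.
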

\begin{proof}	
There are maps of differential $\k$-modules:
\[
h\colon C(\underline X, K)\longrightarrow R(\underline X,K)
\]
and 
\[
g\colon R(\underline X,K) \longrightarrow C(\underline X, K)
\]
given by
\[
h\Big( \omega_{i_1} \cdots \omega_{i_s} y_{l_1}\cdots y_{l_t} \otimes \Big[s^{-1}b_{i_1,k_1}\cdots s^{-1}b_{i_s,k_s} \Big]\otimes \Big[s^{-1} b_{l_1, j_1} \cdots s^{-1} b_{l_t, j_t}\Big] \Big)=(ub_{i_1,k_1}\cdots ub_{i_s,k_s})( b_{l_1, j_1}\cdots  b_{l_t, j_t})
\]
and 
\[
g\Big((ub_{i_1,k_1}\cdots ub_{i_s,k_s})( b_{l_1, j_1}\cdots  b_{l_t, j_t})\Big)=
 \omega_{i_1} \cdots \omega_{i_s} y_{l_1}\cdots y_{l_t} \otimes \Big[s^{-1}b_{i_1,k_1}\cdots s^{-1}b_{i_s,k_s} \Big]\otimes \Big[s^{-1} b_{l_1, j_1} \cdots s^{-1} b_{l_t, j_t}\Big]. 
\] 
	
The homomorphisms $h$ and $g$ commute with differentials and 
$g \circ h = id$.	
\end{proof}

We note that $H^*(\cxx^K;\k) \cong   \mathrm{Tor}_ {\polXI}(SR(\underline{X}, K),\k)$ if $X_i = S^{n_i}$ for each $i$.  For $\widetilde{H}^*(X_i)$ with more than one generator, the map $g$ is zero on generators, $(ub_{i_1,k_1}\cdots ub_{i_s,k_s})( b_{l_1, j_1}\cdots  b_{l_t, j_t})$ whenever  a repetition occurs in the list $$i_1, \ldots,i_s, l_1,\ldots, l_t$$  obtained by dropping the second subscripts in the bi-indexing.  
	
	
	\section{Algebra models}\label{sectionAlgebra}
	
In this section we coalesce the Bahri-Bendersky-Cohen-Gitler $*-$product~\eqref{starprod} on $H^*(\cxx^K)$ with the Cai~\cite{c} and Franz~\cite{f} differential algebra $B(K)$ to give a natural differential algebra $B(\underline{X},K)$ whose cohomology is isomorphic the the cohomology of $\cxx^K$.

\begin{defin}\label{def:BXK}
Let $K$ be a simplicial complex on $[m]$ and let $\underline X=\{X_i\}_{i=1}^m$ be $CW$-complexes. Define a differential bigraded non-commutative algebra $(B(\underline X, K), d)$ as
\[
B(\underline X, K)=\bigoplus_{J\subset [m]} B_J(K) \otimes H^J
\]
where $B_J(K)$ is a subalgebra of $B(K)$ consisting of elements with support $J$ and $H^J=\underset{i \in [m]}{\bigotimes }H_i$ with  $H_i=\widetilde{H}^{\ast}(X_i;\k)$ if $i \in J$ and $H_i =\k$ if $i \notin J$. 

The differential $d$ on $B(\underline X, K)$ is given as the tensor product of the differential on $B_J(K)$,
induced from the dga $B(K)$, and the trivial differential on $H^J$, induced by the trivial differential on $H^*(X_i)$.
\end{defin}

We now recognise $B(\underline X, K)$ as a dga model for the cohomology of $(\underline{CX}, \underline X)^K$.

\begin{prop}
\label{Bmodelofcxx}
Let $K$ be a simplicial complex on $[m]$ and let $\underline X=\{X_i\}_{i=1}^m$ be $CW$-complexes with $H^*(X_i)$ being free $\k$-modules. Then the dga $B(\underline X, K)$ is quasi-isomorphic to $H^*(\cxx^K; \k)$.
\end{prop}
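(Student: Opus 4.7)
The plan is to decompose $B(\underline X, K)$ according to the $J$-summands, identify the cohomology of each summand with $H^*(|\Sigma K_J|;\k)\otimes H^J$, and then upgrade the resulting additive isomorphism to an algebra isomorphism using the BBCG characterisation of the $*$-product. The starting observation is that $B(K)$ carries a natural notion of support analogous to Definition \ref{def:supp}: a monomial $s_I t_L$ has support $I\cup L$, and the subspace $B_J(K)\subseteq B(K)$ spanned by monomials with support exactly $J$ is closed under the differential (since $d$ acts by $ds_i=-t_i$). The additive isomorphism $f\colon\bar R(K)\to B(K)$ of Definition \ref{def:quasi} respects support by construction, so it restricts to chain isomorphisms $\bar R_J(K)\to B_J(K)$ for each $J\subseteq[m]$.

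Consequently, Lemma \ref{lem:support} combined with Proposition \ref{prop:fddf} gives a $\k$-module isomorphism
\[
H^*(B_J(K))\cong H^*(\bar R_J(K))\cong H^*(|\Sigma K_J|;\k).
\]
Since $H^J$ is a free $\k$-module carrying the zero differential, the K\"unneth formula yields
\[
H^*(B(\underline X, K))\;=\;\bigoplus_{J\subseteq[m]}H^*(B_J(K))\otimes H^J\;\cong\;\bigoplus_{J\subseteq[m]}H^*(|\Sigma K_J|;\k)\otimes H^J.
\]
Composing with the BBCG additive isomorphism \eqref{eqn:splittingH} then produces an additive isomorphism $H^*(B(\underline X, K))\cong H^*(\cxx^K;\k)$.

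It remains to promote this to an algebra isomorphism, and this is where the main work lies. By \cite{bbcg1}, the cup product on $H^*(\cxx^K;\k)$ is transported through \eqref{eqn:splittingH} precisely to the $*$-product on $\bigoplus H^*(|\Sigma K_J|;\k)\otimes H^J$, and that $*$-product is completely determined by (i) the cup product on $H^*((D^1,S^0)^K;\k)$ restricted to the pairings $H^*(|\Sigma K_J|;\k)\otimes H^*(|\Sigma K_L|;\k)\to H^*(|\Sigma K_{J\cup L}|;\k)$, and (ii) the cup products on each $H^*(X_i;\k)$. The algebra structure on $B(\underline X, K)$ is built out of exactly these two ingredients: the multiplication on the $B_J(K)$-factor is inherited from $B(K)$, whose cohomology is $H^*((D^1,S^0)^K;\k)$ as an algebra by Franz's theorem, while the multiplication on the $H^J$-factor is the tensor product of the cup products on each $\widetilde H^*(X_i;\k)$. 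Checking that these two ingredients assemble into the $*$-product amounts to verifying that (a) when $J\cap L=\emptyset$, the product $B_J(K)\otimes B_L(K)\to B_{J\cup L}(K)$ realises the cup product $H^*(|\Sigma K_J|)\otimes H^*(|\Sigma K_L|)\to H^*(|\Sigma K_{J\cup L}|)$ after passing to cohomology, and (b) when $J\cap L\neq\emptyset$, the relations $s_i^2=s_it_i=t_i^2=0$ of \eqref{Brel} force the product to land in the appropriate $B_{J\cup L}(K)$ and to combine with the tensor product of cup products on the $H^J\otimes H^L$ factors to reproduce the $*$-product.

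The main obstacle is (b): ensuring that the interaction between the $B(K)$-product and the tensor product of the $H^J$'s, including the grading signs, matches the $*$-product of \cite{bbcg1} exactly. This will be handled by tracking supports through the monomial basis of $B(K)$ and comparing with the explicit description of the $*$-product on diagonal intersections in \cite{bbcg1}. Once the multiplicative compatibility is checked on the generating monomials $s_I t_L\otimes h$, the full algebra isomorphism follows from the fact that both sides are freely determined (as algebras over the $B_J(K)$ and the $H^*(X_i;\k)$) by this data.
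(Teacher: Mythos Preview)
Your plan is essentially the paper's own argument, just unpacked in more detail: the paper's proof is a single sentence citing the BBCG $*$-product description \eqref{starprod}, the K\"unneth theorem, and Franz's quasi-isomorphism $B(K)\simeq\mathcal C^*((D^1,S^0)^K)$, which is exactly the three ingredients you assemble. One correction worth making before you carry out step~(b): you quote the relations as $s_i^2=s_it_i=t_i^2=0$, but \eqref{Brel} actually has $s_is_i=s_i$ and $t_is_i=t_i$ (only $s_it_i$ and $t_it_i$ vanish). This does not invalidate your conclusion that the product $B_J(K)\otimes B_L(K)$ lands in $B_{J\cup L}(K)$, since an index appearing in both supports still appears in a nonzero product, but the detailed comparison with the $*$-product on overlapping supports must use the correct (non-commutative, idempotent) relations rather than the vanishing ones you wrote.
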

\begin{proof}
The statement follows as a direct consequence of the Bhari-Bendersky-Cohen-Gitler description of the cup product structure on $H^*(\cxx^K;\k)$ given by the $*-$product~\ref{starprod},  the K\"{u}nneth theorem and the result of Franz~\cite{f} stating that the dga $B(K)$ is quasi-isomorphic to $\mathcal C^{\ast}( (D^1,S^0)^K)$.
\end{proof}

We next  enhance the construction to give a dga which is quasi-isomorphic to
$\mathcal C^{\ast}(\cxx^K, \k)$, the singular cochains of $\cxx^K$.   

\begin{defin}\label{def:BCXK}
Let $K$ be a simplicial complex on vertex set $[m]$ and let $\underline X=\{X_i\}_{i=1}^m$ be $CW$-complexes. Define a differential bigraded non-commutative algebra $(B(\mathcal C^*(\underline X), K), d)$ as
\[
B(\mathcal C^*(\underline X), K)=\bigoplus_{J\subset [m]} B_J(K) \otimes \mathcal C^J
\]
where $B_J(K)$ is a subalgebra of $B(K)$ consisting of elements with support $J$ and $\mathcal C^J=\underset{i \in [m]}{\bigotimes }\mathcal C_i$ with  $\mathcal C_i=\mathcal C^{\ast}(X_i,\k)$ if $i \in J$ and $\mathcal C_i =\k$ if $i \notin J$. 

The differential $d$ on $B(\mathcal C^*(\underline X), K)$ is given as the tensor product of the differential on $B_J(K)$,
induced from the dga $B(K)$, and the differential on $\mathcal C^J$, induced by the differential on $\mathcal C^*(X_i,\k)$.
\end{defin}

\begin{prop}
\label{prop:algebramodel}
Let $K$ be a simplicial complex on $[m]$ and let $\underline X=\{X_i\}_{i=1}^m$ be $CW$-complexes with $H^*(X_i)$ being free $\k$-modules. Then the dga $\mathcal C^*((C\underline X, \underline X)^K)$ is quasi-isomorphic to the dga $B(\mathcal C^{\ast}(\underline{X}),K)$.\qed
	\end{prop}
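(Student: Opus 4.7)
The plan is to upgrade Proposition~\ref{Bmodelofcxx} from the cohomology level to the cochain level by replacing each $H^*(X_i;\k)$ with $\mathcal{C}^*(X_i;\k)$, and then to combine three ingredients: the BBCG stable decomposition (realised at the cochain level), Franz's dga quasi-isomorphism $B(K) \simeq \mathcal{C}^*((D^1,S^0)^K;\k)$ applied support-wise, and the K\"unneth theorem for smash products.

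First, I would refine the support decomposition. The dga $B(K)$ splits as a $\k$-module as $\bigoplus_J B_J(K)$, and by Lemma~\ref{lem:support} together with Franz's theorem, each $B_J(K)$ is quasi-isomorphic to $\mathcal{C}^*(\Sigma|K_J|;\k)$ in a way compatible with the $\ast$-product of \cite{bbcg1}. Tensoring this with $\mathcal{C}^J = \bigotimes_{i\in J}\mathcal{C}^*(X_i;\k)$ and applying the K\"unneth theorem, which is a quasi-isomorphism because each $\mathcal{C}^*(X_i;\k)$ is degreewise free over $\k$ by assumption, produces a quasi-isomorphism
\[
B_J(K) \otimes \mathcal{C}^J \;\xrightarrow{\simeq}\; \mathcal{C}^*\bigl(|\Sigma K_J| \wedge Y^{\wedge J};\k\bigr)
\]
of differential graded $\k$-modules for each $J \subseteq [m]$.

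Summing over $J$, the BBCG stable splitting \eqref{eqn:splitting2} identifies the right-hand side (after desuspension) with a cochain complex additively quasi-isomorphic to $\mathcal{C}^*(\cxx^K;\k)$. To promote this additive quasi-isomorphism to a multiplicative one, I would invoke the Bahri--Bendersky--Cohen--Gitler result, already used in the proof of Proposition~\ref{Bmodelofcxx}, that the cup product on $H^*(\cxx^K;\k)$ is completely determined by the $\ast$-product, i.e.\ by the cup product on $H^*((D^1,S^0)^K;\k)$ together with the cup products of the $H^*(X_i;\k)$. Since Franz's quasi-isomorphism encodes the first factor at the chain level and the K\"unneth map encodes the second, the induced map on cohomology is the $\ast$-product and therefore agrees with the cup product on $H^*(\cxx^K;\k)$.

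The main obstacle I expect is strict multiplicativity. Franz's quasi-isomorphism $B(K) \to \mathcal{C}^*((D^1,S^0)^K;\k)$ and the K\"unneth map are, in general, only quasi-isomorphisms of dgas up to coherent homotopy, not strict dga maps at the cochain level. Rather than try to rigidify the product, the cleanest route is to present the statement as a zig-zag of dga quasi-isomorphisms
\[
B(\mathcal{C}^*(\underline X), K) \;\xleftarrow{\simeq}\; \bigoplus_{J\subseteq[m]} B_J(K)\otimes \mathcal{C}^J \;\xrightarrow{\simeq}\; \mathcal{C}^*(\cxx^K;\k),
\]
with each arrow a quasi-isomorphism of differential bigraded algebras, which suffices for the conclusion since quasi-isomorphism of dgas is invariant under zig-zags. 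The induced isomorphism of cohomology rings is precisely the $\ast$-product identification of Proposition~\ref{Bmodelofcxx}, which verifies multiplicativity on $H^*$ and so closes the argument.
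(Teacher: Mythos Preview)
Your approach is the paper's own---the paper's proof is one sentence invoking Proposition~\ref{Bmodelofcxx} and the K\"unneth formula, and you have simply unpacked those two ingredients (BBCG $\ast$-product, Franz's quasi-isomorphism, K\"unneth).

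One concrete slip: your zig-zag is vacuous on the left. By Definition~\ref{def:BCXK}, $B(\mathcal{C}^*(\underline X), K)$ \emph{is} $\bigoplus_{J\subseteq[m]} B_J(K)\otimes \mathcal{C}^J$ as a dga, so the left arrow is the identity and no intermediate object has been introduced. This collapses your argument back to a single claimed map $B(\mathcal{C}^*(\underline X), K)\to \mathcal{C}^*(\cxx^K;\k)$. For this map you verify (i) that it is an additive quasi-isomorphism and (ii) that the induced map on $H^*$ is a ring map; but (i) and (ii) together do not make the map a dga map at the cochain level, which is exactly the obstacle you flagged and then did not resolve. To make the zig-zag genuine you would need an honest intermediate dga---for instance one built from Franz's cochain-level map $B(K)\to\mathcal{C}^*((D^1,S^0)^K)$ tensored support-wise with $\mathcal{C}^J$, landing in a model for $\mathcal{C}^*(\Sigma|K_J|\wedge Y^{\wedge J})$---rather than the source itself. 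The paper's one-line proof is no more explicit on this point, so your level of rigour matches the paper's; just don't present the identity as a nontrivial leg of a zig-zag.
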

\begin{proof}
The statement is a straightforward consequence of Proposition~\ref{Bmodelofcxx} and the K\" unneth formula.
\end{proof}
The bar construction applied to $B(\mathcal C^{\ast}(\underline{X}),K)$ gives a model for the loops space of $\cxx^K$ when spaces $X_i$ have torsion free $\k$-cohomology.  We will return to this point in a future paper.

We finish the paper by compering the dga $C(\underline X, K)$ with the dga $B(\underline X, K)$.
Restricting to the $\k$-module structures, we observe that $B(\underline X, K)$ is a differential bigraded $\k$-module model for $H^*(\cxx^K;\k)$. 

By extending the map $f\colon \bar R(K)\longrightarrow B(K)$ in Definition \ref{def:quasi}, we define
 an isomorphism of differential $\k$-modules
$f_{\underline X}\colon C(\underline{X},K)\longrightarrow B(\underline{X},K)$ by 
\[
	f_{\underline X}(\omega_I y_L\otimes h) = \epsilon(I,L) s_I t_L\otimes h
\]
where $\epsilon(I,L)$ is the sign of the permutation that converts $IL$,  the concatenation of $I$ followed by $L$, into an increasing sequence.

Straightforwardly, following the proof of Proposition~\ref{prop:fddf} we have the following statement.
\begin{lem}
The additive isomorphism $f_{\underline X}$ commutes up to sign with the differentials. Specifically, 
\[
f_{\underline{X}}d_{C(\underline X, K)}(\omega_Iy_L\otimes h)= (-1)^{|I|}d_{B(\underline X, K)}f_{\underline X}(\omega_Iy_L\otimes h).
\]\qed
\end{lem}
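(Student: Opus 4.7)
The plan is to reduce the statement directly to Proposition~\ref{prop:fddf} by exploiting the fact that both differentials act trivially on the $H^J$ factor. By Definitions of $C(\underline X, K)$ and $B(\underline X, K)$, the differentials on both complexes are tensor products: the differential on $\bar R_J(K)$ (respectively $B_J(K)$) tensored with the trivial differential on $H^J$. Likewise, the map $f_{\underline X}$ is, by construction, just the map $f\colon \bar R(K)\longrightarrow B(K)$ from Definition~\ref{def:quasi} applied in the first tensor factor, leaving the $H^J$-factor $h$ untouched. Thus the desired identity decouples into an identity in $\bar R(K)$ and $B(K)$ tensored with the identity on $h$.

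First, I would unpack the two sides explicitly. On the one hand,
\[
f_{\underline X}\bigl(d_{C(\underline X, K)}(\omega_I y_L \otimes h)\bigr) = f_{\underline X}\bigl(d_{\bar R}(\omega_I y_L)\otimes h\bigr) = f\bigl(d_{\bar R}(\omega_I y_L)\bigr)\otimes h,
\]
where the first equality uses that $h$ is a cocycle for the trivial differential and the second uses the definition of $f_{\underline X}$. On the other hand,
\[
d_{B(\underline X, K)}\bigl(f_{\underline X}(\omega_I y_L \otimes h)\bigr) = d_{B(\underline X, K)}\bigl(\epsilon(I,L)\, s_I t_L \otimes h\bigr) = \epsilon(I,L)\, d_B(s_I t_L)\otimes h.
\]

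Second, Proposition~\ref{prop:fddf} gives $f\bigl(d_{\bar R}(\omega_I y_L)\bigr) = (-1)^{|I|}\, d_B\bigl(f(\omega_I y_L)\bigr) = (-1)^{|I|} \epsilon(I,L)\, d_B(s_I t_L)$. Substituting this into the first expression above and comparing with the second yields
\[
f_{\underline X} d_{C(\underline X, K)}(\omega_I y_L \otimes h) = (-1)^{|I|}\, d_{B(\underline X, K)} f_{\underline X}(\omega_I y_L \otimes h),
\]
which is the required identity. There is no real obstacle in this argument: the delicate combinatorial sign computation, namely tracking the permutation that converts the concatenation $IL$ into an increasing sequence, has already been carried out in Proposition~\ref{prop:fddf}. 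The only additional point to verify in the tensored setting is that the trivial differential on $H^J$ lets the $H^J$-factor pass through both $f_{\underline X}$ and the differentials as a passive bystander, which is immediate from the definitions.
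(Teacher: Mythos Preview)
Your proposal is correct and matches the paper's approach: the paper simply notes that the lemma follows straightforwardly from Proposition~\ref{prop:fddf}, and your argument makes this explicit by observing that $f_{\underline X}$ is $f\otimes\id$ on each summand and the differential on $H^J$ is trivial, so the identity reduces directly to that proposition.
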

There is a natural algebra structure on $C(\underline{X}, K)=\bigoplus_{J\subset [m]} \bar R_J(K)\otimes H^J$ induced by the algebra structures on $\bar R(K)$ and $H^*(X_i)$ for all $i$. Notice that $\bar R(K)$ is a commutative algebra while $B(K)$ is not commutative. Therefore, although  $B(\underline X, K)$ and $C(\underline X,K)$ are isomorphic as  $\k$-modules, the isomorphism cannot be extended to the one of  algebras. However, in the case when all $X_i$'s are suspension spaces the algebra structure of $B(\underline X, K)$ reduces to the one of $ C(\underline X, K)$.

\begin{prop}
The algebra structures on $B(\underline X, K)$ and $C(\underline{X}, K)$ coincide  up to sign on the product of classes with disjoint support.  In particular, the algebra structures are isomorphic  up to sign if $X_i$ is a suspension space for all $i$.
\end{prop}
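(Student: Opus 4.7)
The plan is to compare both products directly on pairs of basis monomials with disjoint support, verify agreement up to an overall $\pm 1$, and then observe that in the suspension case the complementary products all vanish. We do not attempt to compute the overall sign precisely, as the proposition only asserts agreement up to sign.

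For the first assertion, take basis elements $\omega_I y_L \otimes h \in \bar R_J(K) \otimes H^J$ and $\omega_{I'} y_{L'} \otimes h' \in \bar R_{J'}(K) \otimes H^{J'}$ with $J \cap J' = \emptyset$. Using the relations in $\bar R(K)$, namely $\omega_i y_j = y_j \omega_i$, $\omega_i \omega_j = -\omega_j \omega_i$ for $i \neq j$, and $y_i y_j = y_j y_i$, the product reduces to $\epsilon_\omega(I,I')\, \omega_{I \cup I'} y_{L \cup L'} \otimes (h \cdot h')$, where $\epsilon_\omega(I,I')$ is the shuffle sign sorting the concatenation $II'$. The parallel computation in $B(K)$, using $s_i t_j = t_j s_i$, $s_i s_j = s_j s_i$ and $t_i t_j = -t_j t_i$, yields $\epsilon_t(L,L')\, s_{I \cup I'} t_{L \cup L'} \otimes (h \cdot h')$. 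Because the total degrees satisfy $|\omega_i| = |s_i| = 0$ and $|y_i| = |t_i| = 1$, the Koszul signs produced when sliding the cohomology factor past the generators agree in both algebras, so $h \cdot h'$ appears in both computations with identical sign. Applying the $\k$-module isomorphism $f_{\underline X}$ therefore identifies both sides with $\pm s_{I \cup I'} t_{L \cup L'} \otimes (h \cdot h')$, establishing the first assertion.

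For the second assertion, I would invoke the standard fact that the reduced cohomology of a suspension has vanishing cup products, so $\widetilde H^{\ast}(X_i) \cdot \widetilde H^{\ast}(X_i) = 0$ whenever $X_i$ is a suspension. Consequently, for any basis elements with overlapping supports $J \cap J' \neq \emptyset$, the tensor factor $h \cdot h'$ contains a product in $\widetilde H^{\ast}(X_i) \cdot \widetilde H^{\ast}(X_i)$ for some $i \in J \cap J'$ and so vanishes in both $B(\underline X, K)$ and $C(\underline X, K)$ simultaneously. Thus every non-zero product involves disjoint support, and the first part promotes $f_{\underline X}$ to an algebra isomorphism up to sign. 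The only delicate point is the sign bookkeeping in the two parallel shuffle computations, but this is mild, since we only need the underlying monomials to coincide, which follows directly from the matching combinatorial merge of the index sets dictated by the commutation rules on generators with distinct indices.
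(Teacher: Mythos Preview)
Your proof is correct and follows essentially the same approach as the paper's. Both arguments reduce to the observation that $\bar R(K)$ and $B(K)$ have compatible commutation relations on generators with \emph{distinct} indices, and that the only discrepancy arises from the relations with repeated indices ($y_i^2=\omega_iy_i=0$ in $\bar R(K)$ versus $s_is_i=s_i$, $t_is_i=t_i$ in $B(K)$); the suspension hypothesis then kills the $H^J$-factor whenever an index repeats, so those discrepancies never contribute. The paper states this more tersely by simply pointing to the defining relations, whereas you carry out the parallel shuffle computations explicitly; your version makes the ``up to sign'' agreement more visible, while the paper's version highlights more directly \emph{where} the two algebras differ.
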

\begin{proof}
It is enough to see that dgas $\bar R(K)$ and $B(K)$ differ only on elements with repeated indices. Notice that in Definition~\ref{def:R} of $\bar R(K)$, we quotient out $y_i^2$ and $\omega_iy_i$. Therefore elements in which indices are repeated are trivial.
On the other hand, the defining relations~\eqref{Brel} of $B(K)$ state that the products with repeated indices, such as $s_is_i=s_i$ and $t_is_i=t_i$, are not trivial. Moreover, together with $s_it_i=0$, these relations imply that $B(K)$ is a non-commutative dga.

If we assume that $X_i$'s are suspension spaces, then since the diagonal map on suspension spaces is null-homotopic, the cup product on $H^J$ is trivial if indices are repeated. This trivialises products with repeated indices in $B(\underline X, K)$ as well despite the product being non-trivial in $B(K)$.
\end{proof}


	

\bibliographystyle{amsalpha}

\end{document}